\definecolor{linkblue}{rgb}{0,0.2,0.6}
\newtheorem{theorem}{Theorem}[section]
\newtheorem{proposition}[theorem]{Proposition}
\newtheorem{corollary}[theorem]{Corollary}
\newtheorem{lemma}[theorem]{Lemma}
\theoremstyle{definition}
\newtheorem{definition}[theorem]{Definition}
\newtheorem{remark}[theorem]{Remark}
\newcommand{\LL}{\mathbb{L}}
\newcommand{\PP}{\mathbb{P}}
\newcommand{\cA}{\mathcal{A} }
\newcommand{\cB}{\mathcal{B} }
\newcommand{\cM}{\mathcal{M} }
\newcommand{\cO}{\mathcal{O} }
\newcommand{\cU}{\mathcal{U} }
\newcommand{\proj}{\mathrm{Proj}\;}
\def\cMzn{\overline{\cM}_{0,n} }
\def\cMgn{\overline{\cM}_{g,n} }
\def\cMg{\overline{\cM}_{g} }
\def\cMga{\overline{\cM}_{g, \cA} }
\def\cUga{\cU_{g,\cA} }
\def\Mg{\overline{M}_{g} }
\def\Mza{\overline{M}_{0,\cA} }
\def\Mga{\overline{M}_{g,\cA} }
\begin{document}

\title[Log canonical models for $\overline{\mathcal{M}}_{g,n}$]
{Log canonical models for $\overline{\mathcal{M}}_{g,n}$}
\date{November, 2011}
\author{Han-Bom Moon}
\address{Department of Mathematics, University of Georgia, Athens, 
GA 30602, USA}
\email{hbmoon@math.uga.edu}

\begin{abstract}
We prove a formula of log canonical models for 
moduli space $\overline{\mathcal{M}}_{g,n}$ of pointed stable curves
which describes all Hassett's 
moduli spaces of weighted pointed stable curves in a single equation.
This is a generalization of the preceding result for genus zero to all genera.
\end{abstract}

\maketitle


\section{Introduction}
\label{sec-introduction}

A central problem in algebraic geometry when studying a variety
$X$ is to determine all birational models of $X$.  One way to approach
this problem is to use divisors that have many sections. For
example, for a big divisor $D$ on $X$, one can hope to define
and learn about a natural birational model
\[
	X(D) := \proj \bigoplus_{k \ge 0}H^{0}(X, \cO(\lfloor kD \rfloor)).
\]
Many results in birational geometry in last several decades are about 
overcoming of technical difficulties such as the finite generation of 
the section ring.

The moduli spaces $\cMg$ and $\cMgn$ of stable 
curves and stable curves with marked points, 
are important as they give information about smooth curves and their
degenerations.  Moreover, as special varieties, they have played a
useful role in illustrating and testing the goals of birational
geometry for example the \emph{minimal model program}.  

In this paper we show the following theorem, 
as an example of log minimal model program applied to 
the moduli space $\cMgn$.
\begin{theorem}\label{thm-mainthmintro}
Let $\cA = (a_{1}, a_{2}, \cdots, a_{n})$ be a weight datum
satisfying $2g - 2 + \sum_{i=1}^{n}a_{i} > 0$. 
Then 
\[
	\cMgn(K_{\cMgn} + 11\lambda + \sum_{i=1}^{n}a_{i}\psi_{i}) 
	\cong \Mga,
\]
where $\Mga$ is the coarse moduli space of Hassett's moduli space $\cMga$ of weighted pointed stable curves with weight datum $\cA$ (\cite{Has03}).
\end{theorem}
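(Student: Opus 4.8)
The plan is to realize $\Mga$ as the target of a birational contraction of $\cMgn$ and to identify the log canonical divisor $D := K_{\cMgn} + 11\lambda + \sum_{i=1}^{n} a_i \psi_i$ with the pullback of an ample class, up to an effective exceptional correction. Since $a_i \le 1$ for every $i$, Hassett's theory supplies a reduction morphism $\rho_{\cA} \colon \cMgn \to \cMga$, and on coarse spaces $\rho_{\cA} \colon \Mgn \to \Mga$, which is birational and contracts exactly the boundary divisors $\delta_{0,S}$ along which a rational tail carrying the markings indexed by $S$ (with $\sum_{i \in S} a_i \le 1$) is collapsed. The general principle I will invoke is standard: if $\rho \colon X \to Y$ is a proper birational morphism of normal varieties with $\rho_* \cO_X = \cO_Y$, if $L$ is an ample $\QQ$-divisor on $Y$, and if $D = \rho^* L + E$ with $E$ an effective $\rho$-exceptional $\QQ$-divisor, then $H^0(X, \lfloor kD \rfloor) \cong H^0(Y, \lfloor kL \rfloor)$ for all $k \ge 0$, whence $X(D) \cong Y(L) = Y$. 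Thus the theorem reduces to producing such a pair $(L, E)$ on $Y = \Mga$.

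The core computation is the divisor-class comparison. First I would rewrite $D$ using the canonical formula $K_{\cMgn} = 13\lambda + \psi - 2\delta$, where $\psi = \sum_i \psi_i$ and $\delta$ is the total boundary, obtaining $D = 24\lambda - 2\delta + \sum_{i=1}^{n}(1 + a_i)\psi_i$. Next I would establish the pullback formulas for $\rho_{\cA}$ on tautological classes: namely $\rho_{\cA}^* \lambda_{\cA} = \lambda$ (the Hodge class is preserved, since contracting rational tails affects neither the genus nor $H^0$ of the dualizing sheaf), together with expressions for $\rho_{\cA}^* \psi_{i,\cA}$ and $\rho_{\cA}^* \delta_{\cA}$ that differ from $\psi_i$ and $\delta$ by explicit sums of the contracted boundary divisors $\delta_{0,S}$. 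The natural candidate for the polarization is the ``same formula'' on the target, $L = K_{\Mga} + 11\lambda_{\cA} + \sum_{i=1}^{n} a_i \psi_{i,\cA}$, and I would verify the identity $D = \rho_{\cA}^* L + E$ by matching coefficients boundary divisor by boundary divisor. The point to check is that every coefficient appearing in $E$ is nonnegative and that $E$ is supported on the $\rho_{\cA}$-exceptional locus; this is precisely where the weight inequality $\sum_{i \in S} a_i \le 1$ on the contracted strata enters.

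It then remains to prove that $L$ is ample on $\Mga$. I would approach this through the Nakai--Moishezon--Kleiman criterion, showing $L$ is strictly positive on every curve class; equivalently, after pulling back to $\cMgn$, that $\rho_{\cA}^* L = D - E$ is nonnegative on all curves and strictly positive on those not contracted by $\rho_{\cA}$. Testing against the standard generators of the cone of curves --- the boundary one-parameter families and the $F$-curves --- reduces ampleness to finitely many numerical inequalities in $g$, $n$, and the $a_i$. I expect this positivity verification to be the main obstacle: because $D$ carries fractional coefficients, controlling the section ring genuinely requires $L$ to be ample rather than merely nef and big, and the combinatorics of which $\delta_{0,S}$ are contracted becomes intricate as $g$ and $n$ grow. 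The divisor-class bookkeeping of the second step is routine but voluminous, whereas the ampleness of $L$ is the substantive geometric input. I would likely organize the latter by induction on the number of marked points, propagating positivity from low-weight base cases using the compatibility of the reduction morphisms $\rho_{\cA,\cB}$ among the various Hassett spaces.
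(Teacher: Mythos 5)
Your overall skeleton --- write $D = \varphi_{\cA}^{*}L + E$ with $L$ ample on $\Mga$ and $E$ effective and $\varphi_{\cA}$-exceptional, then identify section rings --- is exactly the paper's reduction, and your isolation of the ampleness of $L$ as the substantive input is also correct. But your plan for proving that ampleness has a genuine gap: you propose Nakai--Moishezon/Kleiman, ``testing against the standard generators of the cone of curves --- the boundary one-parameter families and the $F$-curves.'' That these classes generate the closed cone of curves of $\cMgn$ (or of $\cMga$) is precisely the F-conjecture, which is open; the paper itself points out that Simpson's genus-zero result was originally conditional on this conjecture and was only later made unconditional. Positivity on a conjectural generating set yields nothing, and no finite list of test curves is known to control $\overline{NE}$ here; in particular curves meeting the interior $\cM_{g,\cA}$ are not reached by $F$-curve degenerations. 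The paper's route around this is Seshadri's criterion rather than Kleiman's: it inducts on boundary strata (the restriction formulas of Section \ref{sec-tautologicaldivisor} show the candidate divisor restricts to divisors of the same shape, and Fedorchuk's nefness results handle the correction terms), and for curves meeting the interior it proves the uniform multiplicity estimate $(2\kappa + \psi)\cdot B \ge \epsilon_{g,\cA}\,\mathrm{mult}_{x}B$ (Proposition \ref{prop-positivity}) via Cornalba's Hilbert-stability inequality (Theorem \ref{thm-Cornalba}) applied to $L = \omega^{k}(\sigma_{n})$, after reductions on the weights and the number of sections. Your proposed induction on the number of marked points supplies no analogue of this interior estimate, which is where the positive genus hypothesis and the stability machinery genuinely enter.

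There is also a concrete error in your candidate polarization. You take $L = K_{\Mga} + 11\lambda + \sum_{i} a_{i}\psi_{i}$ computed intrinsically on the target; on $\cMga$ this equals $2\kappa + \sum_{i}(1+a_{i})\psi_{i}$. The divisor that actually works is the push-forward $\varphi_{\cA *}(\Delta_{\cA}) = 2\kappa + \sum_{i}(1+a_{i})\psi_{i} + \sum_{w_{\{i,j\}}\le 1} w_{\{i,j\}}D_{i=j}$, which carries extra coincident-section boundary terms. With your intrinsic $L$, matching coefficients in $D = \varphi_{\cA}^{*}L + E$ forces $E$ to contain the divisors $D_{0,\{j,k\}}$ with $w_{\{j,k\}} \le 1$ with positive coefficient $w_{\{j,k\}}$; these are \emph{not} $\varphi_{\cA}$-exceptional (they map birationally onto the divisors $D_{j=k} \subset \cMga$), so your general principle identifying $H^{0}(X,\lfloor kD \rfloor)$ with $H^{0}(Y,\lfloor kL \rfloor)$ does not apply, and the two section rings need not agree. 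The paper's identity $\Delta_{\cA} = \varphi_{\cA}^{*}\varphi_{\cA *}(\Delta_{\cA}) + \sum_{w_{I}\le 1}(|I|-2)(1-w_{I})D_{0,I}$ has exceptional error term precisely because the coefficient $(|I|-2)(1-w_{I})$ vanishes when $|I|=2$. Your coefficient check, if carried out, would detect this, but as written the plan names the wrong divisor and would stall at this point even before the ampleness issue.
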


This is proved for genus zero in \cite{Moo11a}.  In this article we
establish the result in all genera. 
A key step is to construct ample divisors on the moduli spaces
$\cMga$, for $g > 0$ (Proposition \ref{prop-ample}).

To put these results into context, we next give some history of this problem.
By \cite{HM82, Har84}, we know that
for $g \ge 24$, the canonical divisor
$K_{\cMg}$ is big, and by \cite{BCHM10}, that the
canonical model $\cMg(K_{\cMg})$ exists.
The hope is that one will be able to describe the canonical
model as a moduli space itself, and this problem has attracted a great
deal of attention.  

One approach has been the \emph{Hassett-Keel program}.
By \cite{CH88}, it is well known that $\cMg(K_{\cMg} + D) \cong \Mg$,
where $D = \cMg - \cM_{g}$ is the sum of all boundary divisors. 
So if we figure out the log canonical models
$\cMg (K_{\cMg} + \alpha D)$ for $0 \le \alpha \le 1$
and find the variation of log canonical models during 
we reduce the coefficient $\alpha$ from $1$ 
to $0$, we can finally obtain the canonical model. 
Still this problem is far from complete except small genera cases
(\cite{Has05, HL10}),
we have understood many different compactifications of $\cM_{g}$.
For example, see \cite{HH08, HH09, FS10}. 

We can perform a similar program for $\cMgn$, the moduli space of pointed 
stable curves.
The first result in this direction is the thesis of M. Simpson (\cite{Sim08}).
He studied the log canonical model of $\cMzn$ assuming the F-conjecture. 
He proved that in a suitable range of $\beta$,
$\cMzn(K_{\cMzn}+\beta D)$ is isomorphic to $\Mza$ where
$\cA$ is symmetric weight datum which depends on $\beta$.
This theorem was later proved without assuming the F-conjecture 
(\cite{AS08, FS11, KM11}). 
For $g = 1$, there are results of Smyth (\cite{Smy10, FS10}) considering 
birational models of type 
$\overline{\cM}_{1,n}(s\lambda + t \sum_{i=1}^{n} \psi_{i} - D)$. 
In this case, birational models are given by moduli spaces 
of symmetric weighted curves with even worse singularities. 
In \cite{Fed11a} Fedorchuk showed that for any genus $g$ and weight 
datum $\cA = (a_{1}, a_{2}, \cdots, a_{n})$ 
satisfying $2g - 2 + \sum_{i=1}^{n}a_{i} > 0$, 
there is a divisor $D_{g, \cA}$ on $\cMgn$ such that 
(1) $(\cMgn, D_{g, \cA})$ is a \emph{lc pair} and (2) 
$\cMgn(K_{\cMgn}+D_{g, \cA}) \cong \Mga$.

Both formula in Theorem \ref{thm-mainthmintro} and \cite{Fed11a} have 
their own interest.
Theorem \ref{thm-mainthmintro} says that 
the \emph{same weight datum} determines the log canonical model of 
parameterized curves and that of the parameter space itself. 
Indeed, for any weight datum $\cA$, 
there is a reduction morphism $\varphi_{\cA} : \cMgn \to 
\cMga$ (\cite[Theorem 4.1]{Has03}). 
For a stable curve $(C, x_{1}, \cdots, x_{n}) \in \cMgn$, its image 
$\varphi_{\cA}(C)$ is given by the \emph{log canonical model}
\begin{equation}\label{eqn-lcmodelfiber}
	C(\omega_{C} + \sum_{i=1}^{n} a_{i}x_{i}) := 
	\proj \bigoplus_{k \ge 0}H^{0}(C, \cO(\lfloor k(\omega_{C} + 
	\sum_{i=1}^{n} a_{i}x_{i})\rfloor))
\end{equation}
of $C$.

In Section \ref{sec-tautologicaldivisor} we list the definition and 
computational results of several tautological divisors. 
In Section \ref{sec-positivity}, we prove Proposition \ref{prop-positivity}
which is the crucial step of the proof of Theorem \ref{thm-mainthmintro}.
We give the rest of the proof in Section \ref{sec-proof}.

\textbf{Acknowledgements.} We thank to Angela Gibney 
for many invaluable suggestions for the first draft of this paper.

\section{A glossary of divisors on $\cMga$}
\label{sec-tautologicaldivisor}

In this section, we recall definitions of tautological divisors on $\cMga$ 
and their push-forward/pull-back formulas.
We omit proofs because they are results of simple generalizations of 
well-known results. For the proofs and ideas, see for instance 
\cite{AC96, AC98, HM98, Has03, Fed11a, FS11, Moo11a}.

\begin{definition}\label{dfn-tautologicaldivisor}
Fix a weight datum $\cA = (a_{1}, a_{2}, \cdots, a_{n})$. 
Let $[n] := \{1, 2, \cdots, n\}$. For $I \subset [n]$, let 
$w_{I}:= \sum_{i \in I}a_{i}$. 
Let $\pi : \cUga \to \cMga$ be the universal family and 
$\sigma_{i} : \cMga \to \cUga$ for $i = 1, 2, \cdots, n$ be the universal 
sections. Also let $\omega := \omega_{\cUga/\cMga}$ be the relative 
dualizing sheaf.

\begin{enumerate}
	\item The \emph{kappa class}: 
	$\kappa := \pi_{*}(c_{1}^{2}(\omega))$. 
	Our definition is different from several others for example  
	\cite{AC96, AC98, ACG11}.
	\item The \emph{Hodge class}:
	$\lambda := c_{1}(\pi_{*}(\omega))$.
	\item The \emph{psi classes}: For $i = 1, 2, \cdots, n$, 
	let $\LL_{i}$ be the line bundle on $\cMga$, whose fiber over 
	$(C, x_{1}, x_{2}, \cdots, x_{n})$ is $\Omega_{C}|_{x_{i}}$, 
	the cotangent space at $x_{i}$ in $C$.
	The \emph{$i$-th psi class} $\psi_{i}$ is $c_{1}(\LL_{i})$. 
	On the other hand, $\psi_{i}$ can be defined in terms of intersection theory.
	$\psi_{i} = \pi_{*}(\omega \cdot \sigma_{i}) = 
	-\pi_{*}(\sigma_{i}^{2})$.
	The \emph{total psi class} is $\psi := \sum_{i=1}^{n}\psi_{i}$.
	\item \emph{Boundaries of nodal curves}:
	Take a pair $(j, I)$ for $0 \le j \le g$ and $I \subset [n]$.
	Suppose that if $j = 0$, then $w_{I} > 1$.
	Let $D_{j, I} \subset \cMga$ be the closure of the locus of curves with 
	two irreducible components $C_{j, I}$ and $C_{g-j, I^{c}}$ such that 
	$C_{j, I}$ (resp. $C_{g-j, I^{c}}$ is a smooth genus $j$ 
	(resp. $g-j$) curve and $x_{i} \in C_{j, I}$ if and only if $i \in I$.
	For a notational convenience, set $D_{j, I} = 0$ when 
	$j = 0$ and $|I| \le 1$.
	Let $D_{irr}$ be the closure of the locus of irreducible nodal curves. 
	Let $D_{nod}$ be the sum of all $D_{j, I}$ and $D_{irr}$.
	\item \emph{Boundaries of curves with coincident sections}:
	Suppose that $I = \{i, j\}$ and $w_{I} \le 1$. 
	Let $D_{i=j}$ be the locus of curves with $x_{i} = x_{j}$. 
	$D_{i=j}$ is equal to $\pi_{*}(\sigma_{i} \cdot \sigma_{j})$.
	Let $D_{sec}$ be the sum of all boundaries of curves with 
	coincident sections.
\end{enumerate}
\end{definition}

The canonical divisor $K_{\cMga}$ is computed by Hassett 
(\cite[Section 3.3.1]{Has03}). By Mumford's relation 
$\kappa = 12\lambda - D_{nod}$ (\cite[Theorem 5.10]{Mum77}), 
it has two different presentations.

\begin{lemma}\label{lem-canonicaldivisor}
\cite[Section 3.3.1]{Has03}
\[
	K_{\cMga} = \frac{13}{12}\kappa - \frac{11}{12}D_{nod}
	+ \psi = 13\lambda - 2D_{nod} + \psi.
\]
\end{lemma}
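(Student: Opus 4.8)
The two presentations are interchanged by the stated Mumford relation $\kappa = 12\lambda - D_{nod}$: since $13\lambda = \tfrac{13}{12}\kappa + \tfrac{13}{12}D_{nod}$, the expression $13\lambda - 2D_{nod} + \psi$ becomes $\tfrac{13}{12}\kappa - \tfrac{11}{12}D_{nod} + \psi$. So it suffices to establish one of them, and the plan is to produce the $\kappa$-form directly by a Grothendieck--Riemann--Roch computation on the universal curve $\pi : \cUga \to \cMga$, then read off the $\lambda$-form by substitution.

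First I would identify the cotangent bundle of the smooth Deligne--Mumford stack $\cMga$. Over the interior $\cM_{g,n}$ of smooth curves with distinct marked points, first-order deformations of $(C, x_{1}, \ldots, x_{n})$ are classified by $\mathrm{Ext}^{1}(\Omega_{C}(\sum_{i} x_{i}), \cO_{C})$, and Serre duality identifies the cotangent space with $H^{0}(C, \omega_{C}^{\otimes 2}(\sum_{i} x_{i}))$, the quadratic differentials with at worst simple poles at the marked points; globally this reads $\Omega^{1}_{\cM_{g,n}} \cong \pi_{*}(\omega^{\otimes 2}(\sum_{i}\sigma_{i}))$ over the interior. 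The key structural point is that over the full boundary this pushforward extends not to $\Omega^{1}_{\cMga}$ itself but to the logarithmic cotangent bundle $\Omega^{1}_{\cMga}(\log \partial)$, where $\partial = D_{nod} + D_{sec}$ is the entire reduced boundary: a quadratic differential with a double pole along a boundary node, respectively along a collision of two sections, pairs with the logarithmic form $d\log$ of the transverse smoothing, respectively separation, parameter. Hence $c_{1}(\pi_{*}(\omega^{\otimes 2}(\sum_{i}\sigma_{i}))) = K_{\cMga} + D_{nod} + D_{sec}$.

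Next I would run Grothendieck--Riemann--Roch for $\pi$ applied to $L := \omega^{\otimes 2}(\sum_{i}\sigma_{i})$, whose relative degree $4g-4+n$ exceeds $2g-2$, so that $R^{1}\pi_{*}L = 0$ and $c_{1}(\pi_{*}L) = c_{1}(\pi_{!}L)$. Pushing forward the degree-two part of $\mathrm{ch}(L)\,\mathrm{td}(T_{\pi})$, namely $\tfrac12 c_{1}(L)^{2} - \tfrac12 c_{1}(L)c_{1}(\omega) + \tfrac{1}{12}(c_{1}(\omega)^{2} + [\mathrm{nodes}])$, I would use the glossary identities $\pi_{*}(c_{1}^{2}(\omega)) = \kappa$, $\pi_{*}(c_{1}(\omega)\sigma_{i}) = \psi_{i}$, $\pi_{*}(\sigma_{i}^{2}) = -\psi_{i}$, together with $\pi_{*}(\sigma_{i}\sigma_{j}) = D_{i=j}$ for colliding pairs and the node Todd term $\pi_{*}[\mathrm{nodes}] = D_{nod}$ that reproduces Mumford's relation. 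Collecting terms gives $c_{1}(\pi_{*}L) = \tfrac{13}{12}\kappa + \psi + \tfrac{1}{12}D_{nod} + D_{sec}$.

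Subtracting the log boundary $D_{nod} + D_{sec}$ then yields $K_{\cMga} = \tfrac{13}{12}\kappa + \psi - \tfrac{11}{12}D_{nod}$, and Mumford's relation gives the $\lambda$-form. The new phenomenon relative to the classical $\cMgn$ computation, and the part I expect to be the main obstacle, is the coincident-section boundary $D_{sec}$: because sections of weight $\le 1$ may collide, the cross terms $\sigma_{i}\sigma_{j}$ no longer vanish and contribute a genuine $+D_{sec}$ in the raw output. The content of the argument is that this term cancels exactly against the $D_{sec}$ part of the logarithmic correction --- equivalently, that the double-pole quadratic differential at a collision is a logarithmic, not honest, cotangent direction along $D_{sec}$. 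Verifying this weight-dependent local claim is precisely where the generalization departs from the \'etale-local node model $xy=t$ of the unpointed case, and once it is in place the formula closes. As a cross-check, since $\cMga$ is the target of Hassett's reduction morphism $\varphi_{\cA} : \cMgn \to \cMga$, one could instead transport the known class $K_{\cMgn} = 13\lambda - 2D_{nod} + \psi$ along $\varphi_{\cA}$ and track the boundary, but the self-contained computation above seems cleaner.
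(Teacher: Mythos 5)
The paper does not actually prove this lemma: it quotes Hassett's computation \cite[Section 3.3.1]{Has03} and records only what your first paragraph does, namely that the two displayed forms are interchanged by Mumford's relation $\kappa = 12\lambda - D_{nod}$. Your proposal therefore supplies a self-contained derivation where the paper offers a citation, and it is sound: the GRR arithmetic checks out (with $\pi_{*}(\sigma_{i}\sigma_{j}) = D_{i=j}$ feeding $+D_{sec}$ into $\tfrac12\pi_{*}(\Sigma^{2}) = -\tfrac12\psi + D_{sec}$, one indeed gets $c_{1}(\pi_{*}(\omega^{\otimes2}(\sum_{i}\sigma_{i}))) = \tfrac{13}{12}\kappa + \psi + \tfrac{1}{12}D_{nod} + D_{sec}$), and the structural claim you flag as the open step is true and closes with a one-line residue computation rather than any new weight-dependent analysis. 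In the local model of a smooth curve with two sections at positions $u, v$ in a fiber coordinate $z$, the local section $q_{u,v} = \frac{dz^{2}}{(z-u)(z-v)}$ of $\pi_{*}(\omega^{\otimes2}(\sigma_{i}+\sigma_{j}))$ pairs with a tangent vector $(\dot u, \dot v)$ via residues to give $\frac{\dot u - \dot v}{u-v}$, i.e.\ $q_{u,v}$ maps exactly to $d\log(u-v)$: the double pole at a collision is a logarithmic direction along $D_{i=j}$, as you asserted. The weights enter only through which pairs are permitted to collide, not through this local geometry, and the node divisors are governed by the classical $xy=t$ model unchanged; so the identification $\Omega^{1}_{\cMga}(\log(D_{nod}+D_{sec})) \cong \pi_{*}(\omega^{\otimes2}(\sum_{i}\sigma_{i}))$ holds and your formula follows. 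Two smaller points: for $R^{1}\pi_{*}L = 0$ the total relative degree $4g-4+n > 2g-2$ is not sufficient on reducible fibers, and one should check the degree componentwise (it does hold for $\cA$-stable curves, using $2g_{i}-2+k_{i}+\sum_{x_{j}\in C_{i}}a_{j} > 0$ on each component); and the cross-check you dismiss at the end is, given the paper's own glossary, actually the fastest complete proof: Corollary \ref{cor-pushforwardpullbackreduction} gives $\varphi_{\cA*}(\psi) = \psi + 2D_{sec}$ and $\varphi_{\cA*}(D_{nod}) = D_{nod} + D_{sec}$ (the $D_{0,I}$ with $|I|\ge 3$, $w_{I}\le 1$ being contracted), so $\varphi_{\cA*}(13\lambda - 2D_{nod} + \psi) = 13\lambda - 2D_{nod} + \psi$, and since $\varphi_{\cA}$ is a birational contraction of normal spaces, $K_{\cMga} = \varphi_{\cA*}(K_{\cMgn})$, reproving the lemma in two lines from the known $\cMgn$ case.
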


Next, we present the push-forward and pull-back formulas we will often use. 

Let $\cA = (a_{1}, a_{2}, \cdots, a_{n})$ and $\mathcal{B} = 
(b_{1}, b_{2}, \cdots, b_{n})$ be weight data such that 
$a_{i} \ge b_{i}$ for all $i = 1, 2, \cdots, n$. 
For $I \subset [n]$, set $w_{I}^{\cA} = \sum_{i \in I}a_{i}$ and 
$w_{I}^{\mathcal{B}} = \sum_{i \in I}b_{i}$.

\begin{lemma}\label{lem-pushforwardpullbackreduction}
Let $\varphi_{\cA, \cB} : \cMga \to \overline{\cM}_{g, \cB}$ 
be the reduction morphism (\cite[Theorem 4.1]{Has03}).
\begin{enumerate}
	\item $\displaystyle\varphi_{\cA, \cB *}(\kappa) = 
	\kappa - \sum_{w_{\{j, k\}}^{\cA} > 1,\; w_{\{j, k\}}^{\cB}
	\le 1}D_{j=k}$.
	\item $\varphi_{\cA, \cB *}(\lambda) = \lambda$.
	\item $\displaystyle\varphi_{\cA, \cB *}(\psi_{i}) = \psi_{i}
	+ \sum_{w_{\{i, j\}}^{\cA} > 1,\; w_{\{i, j\}}^{\cB} \le 1} 
	D_{i=j}$.
	\item $\varphi_{\cA, \cB *}(D_{i, I}) = 
	\begin{cases}
	0, & i = 0, |I| \ge 3, w_{I}^{\cB} \le 1,\\
	D_{j=k}, & i = 0, I = \{j, k\}, w_{I}^{\cB} \le 1,\\
	D_{i, I}, & \mbox{otherwise}.
	\end{cases}$
	\item $\varphi_{\cA, \cB *}(D_{irr}) = D_{irr}$.
	\item $\varphi_{\cA, \cB *}(D_{j=k}) = D_{j=k}$.
	\item $\displaystyle \varphi_{\cA, \cB}^{*}(\kappa) = 
	\kappa + \sum_{w_{I}^{\cB} \le 1,\; w_{I}^{\cA} > 1}D_{0, I}$.
	\item $\varphi_{\cA, \cB}^{*}(\lambda) = \lambda$.
	\item $\displaystyle\varphi_{\cA, \cB}^{*}(\psi_{i}) = \psi_{i}
	- \sum_{i \in I,\; w_{I}^{\cB} \le 1,\; w_{I}^{\cA} > 1}D_{0, I}$.
	\item $\varphi_{\cA, \cB}^{*}(D_{i, I}) = D_{i, I}$. 
	\item $\varphi_{\cA, \cB}^{*}(D_{irr}) = D_{irr}$. 
	\item $\displaystyle \varphi_{\cA, \cB}^{*}(D_{j=k}) = 
	\begin{cases}\displaystyle
	D_{j=k} + \sum_{I \supset \{j, k\},\; w_{I}^{\cB} \le 1}D_{0, I},
	& w_{\{j, k\}}^{\cA} \le 1,\\
	\displaystyle \sum_{I \supset \{j, k\},\; w_{I}^{\cB} \le 1}D_{0, I},
	& w_{\{j, k\}}^{\cA} > 1.\\
	\end{cases}$
\end{enumerate}

\end{lemma}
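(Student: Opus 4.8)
The plan is to read off every identity from the explicit geometry of Hassett's reduction morphism $\varphi_{\cA,\cB}$ (\cite[Theorem 4.1]{Has03}). Since $a_i \ge b_i$, it sends an $\cA$-stable curve to its $\cB$-stable model, obtained fiberwise by contracting exactly the genus-zero tails $C_{0,I}$ with $w_I^{\cA} > 1$ but $w_I^{\cB} \le 1$: such a tail is $\cA$-stable yet $\cB$-unstable, and collapsing it brings all sections $x_i$, $i \in I$, together at the attaching node. Hence $\varphi_{\cA,\cB}$ is a proper birational morphism whose non-trivial fibers sit over the loci where such sections collide; each divisor $D_{0,I}$ with $|I| \ge 3$ is contracted to a locus of codimension $|I| - 1 \ge 2$, while each $D_{0,\{j,k\}}$ maps isomorphically onto $D_{j=k}$. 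Establishing this dictionary of contracted loci is the first step, and everything else is bookkeeping against it.

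The push-forward formulas for boundary divisors (items (4)--(6)) then follow from birationality together with this dimension count: for a proper birational morphism and a prime divisor $D$ in the source, $\varphi_{\cA,\cB *}[D]$ equals $[\overline{\varphi_{\cA,\cB}(D)}]$ when $\varphi_{\cA,\cB}|_D$ is generically injective and vanishes when $\dim \varphi_{\cA,\cB}(D) < \dim D$. For the tautological classes I would pass to the universal curves. Writing $\Phi : \cUga \to \cU_{g,\cB}$ for the fiberwise contraction lying over $\varphi_{\cA,\cB}$, one has a comparison $\omega_{\cUga/\cMga} = \Phi^{*}\omega_{\cU_{g,\cB}/\overline{\cM}_{g,\cB}} + E$ with $E$ an effective divisor supported on the contracted tails. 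Pushing this identity (and its square) down by $\pi$, intersecting with the sections $\sigma_i$, and invoking $\psi_i = \pi_*(\omega \cdot \sigma_i)$, $D_{i=j} = \pi_*(\sigma_i \cdot \sigma_j)$, and $\kappa = \pi_*(c_1^2(\omega))$ from Definition \ref{dfn-tautologicaldivisor} expresses the pull-backs $\varphi_{\cA,\cB}^{*}\kappa$ and $\varphi_{\cA,\cB}^{*}\psi_i$ (items (7), (9)) in terms of how $E$ meets the sections; the push-forwards (items (1), (3)) then drop out of the projection formula $\varphi_{\cA,\cB *}\varphi_{\cA,\cB}^{*} = \mathrm{id}$ combined with (4)--(6). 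The Hodge class is the one case needing no such bookkeeping: contracting rational tails changes neither $H^0(C,\omega_C)$ nor $\pi_*\omega$, so the Hodge bundles on the two spaces agree and $\varphi_{\cA,\cB *}\lambda = \lambda = \varphi_{\cA,\cB}^{*}\lambda$ (items (2), (8)).

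It remains to pin down the coefficients of the exceptional divisors $D_{0,I}$ in the pull-backs of boundary classes. Items (10) and (11) hold because no contracted tail dominates a generic point of $D_{i,I}$ or $D_{irr}$, so the scheme-theoretic preimage is the strict transform with no exceptional contribution. The substantive case is (12): the locus $x_j = x_k$ on $\overline{\cM}_{g,\cB}$ pulls back to the strict transform $D_{j=k}$ (present exactly when $w_{\{j,k\}}^{\cA} \le 1$) together with every exceptional $D_{0,I}$ having $\{j,k\} \subset I$ and $w_I^{\cB} \le 1$, since collapsing $C_{0,I}$ is precisely what creates $x_j = x_k$.

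The hard part will be verifying that each exceptional divisor $D_{0,I}$ appears with coefficient exactly $1$ in items (7), (9), and (12). This is a local question at a generic point of $D_{0,I}$: étale-locally $\varphi_{\cA,\cB}$ is the contraction of a single $\PP^1$-tail in a one-parameter family, and one must check that the function cutting out $x_j = x_k$ --- and, through the relation $\kappa = 12\lambda - D_{nod}$ of Lemma \ref{lem-canonicaldivisor}, the discrepancies carried by $\psi_i$ and $\kappa$ --- vanishes to order one along the resulting exceptional divisor. Once these multiplicities are settled the remaining identities follow either from the same local model or, as a consistency check, from $\varphi_{\cA,\cB *}\varphi_{\cA,\cB}^{*} = \mathrm{id}$, which matches each pull-back against its push-forward counterpart.
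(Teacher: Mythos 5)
The paper deliberately omits a proof of this lemma, referring the reader to the standard computations in \cite{AC96, AC98, HM98, Has03}, and your proposal is precisely that standard argument: identify the exceptional divisors of the reduction morphism as the $D_{0,I}$ with $w_I^{\cA} > 1 \ge w_I^{\cB}$, compute pull-backs on the universal curve via the comparison $\omega_{\cUga/\cMga} = \Phi^{*}\omega_{\cU_{g,\cB}/\overline{\cM}_{g,\cB}} + E$ with $E$ the family of contracted tails, and recover the push-forward formulas from $\varphi_{\cA,\cB *}\varphi_{\cA,\cB}^{*} = \mathrm{id}$ together with the cycle-theoretic fate of each boundary divisor (contracted when $|I| \ge 3$, birational onto $D_{j=k}$ when $|I| = 2$). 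The one slip is harmless: $D_{0,\{j,k\}}$ maps onto $D_{j=k}$ birationally rather than isomorphically (the restriction is itself a reduction morphism on the genus-$g$ factor and can contract further loci), but since your push-forward argument uses only generic injectivity, all twelve formulas and the multiplicity-one verifications you correctly flag as the substantive point go through as described.
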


The special case $\varphi_{(1, 1, \cdots, 1), \cA} : 
\overline{\cM}_{g, (1, 1, \cdots, 1)} \to \cMga$ is particularly important. 
For notational convenience, let $\varphi_{\cA} := 
\varphi_{(1, 1, \cdots, 1), \cA}$.

\begin{corollary}\label{cor-pushforwardpullbackreduction}
For  $\varphi_{\cA} : \cMgn = \overline{\cM}_{g, (1, 1, \cdots, 1)} 
\to \cMga$,
\begin{enumerate}
	\item $\varphi_{\cA *}(\kappa) = \kappa - D_{sec}$.
	\item $\varphi_{\cA *}(\lambda) = \lambda$.
	\item $\displaystyle \varphi_{\cA *}(\psi_{i}) = 
	\psi_{i} + \sum_{w_{\{i, j\}}\le 1}D_{i=j}$.
	\item $\varphi_{\cA *}(D_{i, I}) = 
	\begin{cases} 
	0, & i = 0, |I| \ge 3, w_{I} \le 1,\\
	D_{I}, & i = 0, |I| = 2, w_{I} \le 1,\\
	D_{i, I}, & \mbox{otherwise}.
	\end{cases}$
	\item $\varphi_{\cA *}(D_{irr}) = D_{irr}$.
	\item $\displaystyle \varphi_{\cA}^{*}(\kappa) = 
	\kappa + \sum_{w_{I} \le 1}D_{0, I}$.
	\item $\varphi_{\cA}^{*}(\lambda) = \lambda$.
	\item $\displaystyle \varphi_{\cA}^{*}(\psi_{i}) = \psi_{i} 
	- \sum_{i \in I,\; w_{I} \le 1}D_{0, I}$.
	\item $\varphi_{\cA}^{*}(D_{i, I}) = D_{i, I}$.
	\item $\varphi_{\cA}^{*}(D_{irr}) = D_{irr}$.
	\item $\displaystyle \varphi_{\cA}^{*}(D_{j=k}) = 
	\sum_{I \supset \{j, k\},\; w_{I} \le 1}D_{0, I}$.
\end{enumerate}
\end{corollary}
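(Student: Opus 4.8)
The plan is to obtain the corollary as the specialization of Lemma \ref{lem-pushforwardpullbackreduction} in which the heavier weight datum is taken to be $(1,1,\ldots,1)$ and the lighter one to be $\cA$; the hypothesis $a_i \geq b_i$ of the lemma becomes $1 \geq a_i$, which holds for every weight datum, so the morphism there is legitimately $\varphi_{(1,\ldots,1),\cA} = \varphi_{\cA}$. Every identity in the corollary is the exact analogue of one in the lemma, so the only work is to simplify the index conditions under this substitution and to recognize the resulting sums among the divisors of Definition \ref{dfn-tautologicaldivisor}.

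The observation that drives all the simplifications is that for the constant weight datum $(1,\ldots,1)$ one has $w_I^{(1,\ldots,1)} = |I|$ for every $I \subset [n]$; in particular $w_{\{j,k\}}^{(1,\ldots,1)} = 2 > 1$ for every pair, so on the source $\cMgn$ no two sections may collide and there are no coincident-section boundaries to push forward (which is why no such push-forward appears in the corollary). Inserting this into the push-forward formulas of the lemma makes the hypotheses $w_{\{j,k\}}^{(1,\ldots,1)} > 1$ vacuous: the $\kappa$ formula collapses to $\varphi_{\cA *}(\kappa) = \kappa - \sum_{w_{\{j,k\}} \leq 1} D_{j=k} = \kappa - D_{sec}$ by the definition of $D_{sec}$, the $\psi_i$ formula becomes the corollary's, and the push-forward of $D_{i,I}$ reduces to its stated three cases, the middle value $D_{j=k}$ being written $D_I$ when $I = \{j,k\}$.

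For the pull-back formulas the same substitution turns $w_I^{(1,\ldots,1)} > 1$ into $|I| \geq 2$, a restriction already enforced by the convention $D_{0,I} = 0$ for $|I| \leq 1$ and hence suppressible from the summation index; this yields $\varphi_{\cA}^{*}(\kappa) = \kappa + \sum_{w_I \leq 1} D_{0,I}$ and the stated formula for $\psi_i$. In the pull-back of $D_{j=k}$ the universal inequality $w_{\{j,k\}}^{(1,\ldots,1)} = 2 > 1$ selects the second branch of the case distinction in part (12) of the lemma, so the $D_{j=k}$ summand drops out and one is left with $\varphi_{\cA}^{*}(D_{j=k}) = \sum_{I \supset \{j,k\},\, w_I \leq 1} D_{0,I}$. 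There is no genuine obstacle: each identity is a direct instance of the lemma, and the only care needed is the bookkeeping of matching the specialized index sets against the definitions of $D_{sec}$ and $D_{0,I}$ while respecting the vanishing convention.
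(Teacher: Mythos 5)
Your proposal is correct and matches the paper's own treatment: the paper states the corollary without separate proof precisely because it is the specialization of Lemma \ref{lem-pushforwardpullbackreduction} to $\varphi_{(1,1,\cdots,1),\cA}$, which is exactly what you carry out. Your bookkeeping of the vacuous conditions $w^{(1,\ldots,1)}_{I} = |I| > 1$, the absence of coincident-section boundaries on the source, and the vanishing convention $D_{0,I} = 0$ for $|I| \le 1$ is accurate throughout.
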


\begin{lemma}
Let $\rho : \overline{\cM}_{g, \cA \cup \{a_{p}\}} \to \cMga$ be the 
forgetful morphism (\cite[Theorem 4.3]{Has03}).
\begin{enumerate}
	\item $\displaystyle \rho^{*}(\kappa) = \kappa 
	+ \sum_{w_{\{i, p\}} > 1}D_{0, \{i, p\}}$.
	\item $\rho^{*}(\lambda) = \lambda$.
	\item $\rho^{*}(\psi_{i}) = 
	\begin{cases}
	\psi_{i}, & w_{\{i, p\}} \le 1,\\
	\psi_{i} - D_{0, \{i, p\}}, & w_{\{i, p\}} > 1.
	\end{cases}$
	\item $\rho^{*}(D_{i, I}) = D_{i, I} + D_{i, I \cup \{p\}}$.
	\item $\rho^{*}(D_{irr}) = D_{irr}$.
	\item $\rho^{*}(D_{j=k}) = 
	\begin{cases}
	D_{j=k}, & w_{\{j, k, p\}} \le 1,\\
	D_{j=k} + D_{0, \{j, k, p\}}, & w_{\{j, k, p\}} > 1.
	\end{cases}$
\end{enumerate}
\end{lemma}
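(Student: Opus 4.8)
The plan is to read each identity off the explicit description of the forgetful morphism in \cite[Theorem 4.3]{Has03}: one discards the section $\sigma_{p}$ and then contracts every component that has become unstable. The only feature distinguishing this from the classical forgetful map $\pi : \overline{\cM}_{g,n+1}\to\overline{\cM}_{g,n}$ is a weight threshold --- a rational tail carrying the forgotten point together with a set of markings indexed by $J$ survives in $\cMga$ precisely when $w_{J\cup\{p\}}>1$, and otherwise those markings simply come together at a point. Two of the identities are then immediate: the Hodge class is unaffected because the rational trees that $\rho$ contracts contribute nothing to $\pi_{*}\omega$, giving $\rho^{*}\lambda=\lambda$; and an irreducible nodal curve stays irreducible after forgetting a section, giving $\rho^{*}D_{irr}=D_{irr}$.

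For the remaining boundary classes I would analyze the preimage of each stratum according to where the forgotten point lies. Over a generic point of $D_{i,I}$ the section $\sigma_{p}$ may specialize onto either component, which accounts for the two terms $D_{i,I}+D_{i,I\cup\{p\}}$; over $D_{j=k}$ the point $x_{p}$ either stays away from $x_{j}=x_{k}$, contributing $D_{j=k}$, or collides with them, in which case the weight dictionary produces the extra tail $D_{0,\{j,k,p\}}$ exactly when $w_{\{j,k,p\}}>1$. The psi classes are handled by the same dictionary applied to the cotangent line $\LL_{i}$: this line is altered precisely when $x_{i}$ lies on a tail that $\rho$ contracts, and the smallest such tail, carrying $x_{i}$ and $x_{p}$, appears exactly when $w_{\{i,p\}}>1$, which gives the case distinction in the formula for $\rho^{*}\psi_{i}$.

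For $\kappa$ I would avoid a direct computation with $c_{1}(\omega)^{2}$ and instead combine Mumford's relation $\kappa = 12\lambda - D_{nod}$ (Lemma \ref{lem-canonicaldivisor}) with the pullbacks already obtained: since $\rho^{*}$ is linear and $\rho^{*}\lambda=\lambda$, the identity $\rho^{*}\kappa = 12\lambda - \rho^{*}D_{nod}$ expresses $\rho^{*}\kappa$ through $\kappa$ on $\overline{\cM}_{g,\cA\cup\{a_{p}\}}$ together with the boundary divisors that $\rho$ contracts, and isolating these contractions yields the stated correction by rational tails. As an independent check, and as an alternative route to all six identities at once, one can use the commutative square $\rho\circ\varphi_{(1,\cdots,1),\cA\cup\{a_{p}\}} = \varphi_{\cA}\circ\pi$ coming from Hassett's functoriality: the reduction morphism $\varphi_{(1,\cdots,1),\cA\cup\{a_{p}\}}$ is birational, so its pullback is injective on Picard groups, and hence each $\rho^{*}\tau$ is pinned down by computing $\pi^{*}\varphi_{\cA}^{*}(\tau)$ from the classical forgetful formulas (\cite{AC96, AC98, HM98}) and the reduction formulas of Corollary \ref{cor-pushforwardpullbackreduction}.

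The delicate point throughout is that $\rho$ is not an isomorphism: it contracts the rational tails on which $x_{p}$ has bubbled off, so the pullback of a boundary class or of $\kappa$ acquires contributions along these $\rho$-exceptional loci. The main obstacle is therefore to identify exactly which tails contribute and with what multiplicity --- a bookkeeping problem governed by the thresholds $w_{J}\le 1 < w_{J\cup\{p\}}$ --- and to confirm that the resulting formulas are mutually consistent through Mumford's relation. By contrast, the $\lambda$, $D_{irr}$, and nodal-boundary identities are routine weighted analogues of the classical computations and present no real difficulty.
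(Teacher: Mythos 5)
Your proposal is correct and takes exactly the route the paper intends: the paper states this lemma in Section \ref{sec-tautologicaldivisor} with proof omitted as a ``simple generalization of well-known results'' (citing \cite{AC96, AC98, HM98, Has03}), and your argument is precisely that generalization --- stratum-by-stratum bookkeeping of the forgotten section governed by the threshold $w_{J \cup \{p\}} > 1$, with the $\kappa$ identity recovered from Mumford's relation (indeed $\rho^{*}D_{nod} = D_{nod} - \sum_{w_{\{i,p\}}>1} D_{0,\{i,p\}}$, since those upstairs divisors would have to arise from the nonexistent $D_{0,\{i\}}$, which produces the stated positive correction). Your alternative verification via the commutative square $\rho \circ \varphi_{(1,\cdots,1),\cA\cup\{a_{p}\}} = \varphi_{\cA}\circ \pi$ and injectivity of the birational pullback is a sound bonus check, not a different proof in substance.
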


For $I = \{j_{1}, j_{2}, \cdots, j_{r}\} \subset [n]$, 
let $D_{i, I}$ be a boundary of nodal curves. 
Set $I^{c} = \{k_{1}, k_{2}, \cdots, k_{s}\}$. Then 
$D_{i, I}$ is isomorphic to $\overline{\cM}_{i, \cA_{I}} \times 
\overline{\cM}_{g-i, \cA_{I^{c}}}$ where 
$\cA_{I} = (a_{j_{1}}, a_{j_{2}}, \cdots, a_{j_{r}}, 1)$
and $\cA_{I^{c}} = (a_{k_{1}}, a_{k_{2}}, \cdots, a_{k_{s}}, 1)$. 
Let $\eta_{i, I} : \overline{\cM}_{i, \cA_{I}} \times 
\overline{\cM}_{g-i, \cA_{I^{c}}} \cong D_{i, I} \hookrightarrow \cMga$ 
be the inclusion. Let $\pi_{\ell}$ for $\ell = 1, 2$ be the projection from 
$\overline{\cM}_{i, \cA_{I}} \times \overline{\cM}_{g-i, \cA_{I^{c}}}$
to the $\ell$-th component.

\begin{lemma}\label{lem-restrictiontonodalboundary}
Let $p$ (resp. $q$) be the last index of $\cA_{I}$ (resp. $\cA_{I^{c}}$) 
with weight one. 
\begin{enumerate}
	\item $\eta_{i, I}^{*}(\kappa) = \pi_{1}^{*}(\kappa + \psi_{p}) 
	+ \pi_{2}^{*}(\kappa + \psi_{q})$.
	\item $\eta_{i, I}^{*}(\lambda) = \pi_{1}^{*}(\lambda) 
	+ \pi_{2}^{*}(\lambda)$.
	\item $\eta_{i, I}^{*}(\psi_{j}) = 
	\begin{cases}
	\pi_{1}^{*}(\psi_{j}), & j \in I,\\
	\pi_{2}^{*}(\psi_{j}), & j \in I^{c}.
	\end{cases}$
	\item $\eta_{i, I}^{*}(D_{j, J}) = 
	\begin{cases}
	-\pi_{1}^{*}(\psi_{p}) - \pi_{2}^{*}(\psi_{q}), & D_{i, I} = D_{j, J},\\
	\pi_{1}^{*}(D_{j, J}), & j \le i, J \subset I, D_{i, I} \ne D_{j, J},\\
	\pi_{1}^{*}(D_{g-j, J^{c}}), & g-j \le i, J^{c} \subset I,
	D_{i, I} \ne D_{j, J},\\
	\pi_{2}^{*}(D_{j, J}), & j \le g-i, J \subset I^{c},
	D_{i, I} \ne D_{j, J},\\
	\pi_{2}^{*}(D_{g-j, J^{c}}), & i \le j, I \subset J, D_{i, I} \ne D_{j, J},\\
	0, & \mbox{otherwise}.
	\end{cases}$
	\item $\eta_{i, I}^{*}(D_{irr}) = \pi_{1}^{*}(D_{irr}) + 
	\pi_{2}^{*}(D_{irr})$.
	\item $\eta_{i, I}^{*}(D_{j=k}) = 
	\begin{cases}
	\pi_{1}^{*}(D_{j=k}), & j, k \in I,\\
	\pi_{2}^{*}(D_{j=k}), & j, k \notin I,\\
	0, & \mbox{otherwise}.
	\end{cases}$
\end{enumerate}
\end{lemma}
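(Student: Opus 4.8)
The plan is to exploit the product description $D_{i,I}\cong \overline{\cM}_{i,\cA_{I}}\times\overline{\cM}_{g-i,\cA_{I^{c}}}$ at the level of universal families. Over $D_{i,I}$ the universal curve $\cUga$ is obtained by gluing $\pi_{1}^{*}\cU_{i,\cA_{I}}$ and $\pi_{2}^{*}\cU_{g-i,\cA_{I^{c}}}$ along the node-sections $\pi_{1}^{*}\sigma_{p}$ and $\pi_{2}^{*}\sigma_{q}$, and under this identification the relative dualizing sheaf $\omega$ restricts to $\pi_{1}^{*}(\omega_{1}(\sigma_{p}))$ on the first piece and to $\pi_{2}^{*}(\omega_{2}(\sigma_{q}))$ on the second, where $\omega_{\ell}$ denotes the relative dualizing sheaf of the $\ell$-th factor. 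This single geometric input, together with the normalization sequence, drives the computation of the classes built from $\omega$; the remaining classes are handled by a direct analysis of how the relevant loci meet $D_{i,I}$.

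With this in hand, (2) and (3) are immediate: the Hodge bundle of a curve with one separating node splits as the direct sum of the Hodge bundles of the two components, giving additivity $\eta_{i,I}^{*}\lambda=\pi_{1}^{*}\lambda+\pi_{2}^{*}\lambda$, while the cotangent line at a marked point $x_{j}$ is read off entirely from the component carrying $x_{j}$, so $\eta_{i,I}^{*}\psi_{j}=\pi_{1}^{*}\psi_{j}$ for $j\in I$ and $\pi_{2}^{*}\psi_{j}$ for $j\in I^{c}$. For $\kappa$ I would push forward componentwise. On the first factor, expanding $c_{1}^{2}(\omega_{1}(\sigma_{p}))=c_{1}^{2}(\omega_{1})+2\,c_{1}(\omega_{1})\cdot\sigma_{p}+\sigma_{p}^{2}$ and applying $\pi_{*}(c_{1}^{2}(\omega_{1}))=\kappa$, $\pi_{*}(c_{1}(\omega_{1})\cdot\sigma_{p})=\psi_{p}$ and $\pi_{*}(\sigma_{p}^{2})=-\psi_{p}$ from Definition \ref{dfn-tautologicaldivisor} yields $\pi_{1}^{*}(\kappa+\psi_{p})$; the second factor gives $\pi_{2}^{*}(\kappa+\psi_{q})$, proving (1).

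Parts (5) and (6) are local statements about the boundary. A curve in $D_{i,I}$ acquires a non-separating node exactly when one of its two components degenerates to an irreducible nodal curve, which happens along $\pi_{1}^{*}D_{irr}$ together with $\pi_{2}^{*}D_{irr}$; and two sections $x_{j},x_{k}$ with $w_{\{j,k\}}\le 1$ can collide only if they lie on the same component, giving $\pi_{\ell}^{*}D_{j=k}$ when both indices lie in $I$ (resp. $I^{c}$) and $0$ when they are separated. The self-intersection term in (4) is the standard node-smoothing computation: the normal bundle of $D_{i,I}$ in $\cMga$ is the tensor product of the tangent lines to the two branches at the node, so $\eta_{i,I}^{*}D_{i,I}=c_{1}(N)=-\pi_{1}^{*}\psi_{p}-\pi_{2}^{*}\psi_{q}$.

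The main obstacle is the remaining part of (4), the restriction $\eta_{i,I}^{*}D_{j,J}$ for $D_{j,J}\ne D_{i,I}$, which is a purely combinatorial analysis of dual graphs. A generic point of $D_{i,I}\cap D_{j,J}$ is a curve with two separating nodes, so it degenerates exactly one of the two components $C_{1},C_{2}$; the divisor $D_{j,J}$ meets $D_{i,I}$ properly precisely when the genus-$j$/marking-$J$ splitting is compatible with the genus-$i$/marking-$I$ splitting, i.e. when one of the two parts of the $(j,J)$-splitting is contained in $C_{1}$ or in $C_{2}$. The four nonzero cases record which factor the degeneration occurs on and, crucially, on which side of the new node the original gluing point $p$ (resp. $q$) lies: this is exactly what distinguishes the label $D_{j,J}$ from $D_{g-j,J^{c}}$ on each factor, and it forces the conditions $J\subset I$, $J^{c}\subset I$, $J\subset I^{c}$, $I\subset J$ together with the accompanying genus inequalities. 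All other configurations correspond to boundary divisors that are disjoint from $D_{i,I}$ or meet it in higher codimension, giving $0$. Careful bookkeeping of the node placement under the isomorphism $\overline{\cM}_{i,\cA_{I}}\times\overline{\cM}_{g-i,\cA_{I^{c}}}\cong D_{i,I}$ is the only delicate point; once the correspondence of labels is fixed, each case reduces to the transversality of boundary strata in $\cMga$.
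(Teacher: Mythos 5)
Your proposal is correct, and it reconstructs exactly the standard argument that the paper itself omits (Section \ref{sec-tautologicaldivisor} explicitly defers these proofs to the cited references): the gluing description of the universal family over $D_{i,I}$ with $\omega$ restricting to $\omega_{\ell}(\sigma_{p})$, $\omega_{\ell}(\sigma_{q})$ on the two pieces, which gives (1)--(3) and (5); the normal bundle computation $c_{1}(N_{D_{i,I}}) = -\pi_{1}^{*}\psi_{p} - \pi_{2}^{*}\psi_{q}$ for the self-intersection case of (4); and the dual-graph compatibility analysis, tracking which side of the new node carries the attaching point $p$ (resp.\ $q$), for the remaining cases of (4) and for (6). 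Your expansion $\pi_{*}\bigl(c_{1}^{2}(\omega_{1}(\sigma_{p}))\bigr) = \kappa + 2\psi_{p} - \psi_{p} = \kappa + \psi_{p}$ and the case bookkeeping in (4) match the formulas as stated, so there is nothing to correct.
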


Let $(C, x_{1}, x_{2}, \cdots, x_{n}, p, q)$ be a genus $g-1$, 
$\cA \cup \{1, 1\}$-stable curve. 
By gluing $p$ and $q$, we obtain an $\cA$-stable curve of genus $g$. 
Since this gluing operation is extended to families of curves and functorial, 
we obtain a morphism 
$\xi : \overline{\cM}_{g-1, \cA \cup \{1, 1\}} \to \cMga$. 
Moreover, $\xi$ is an embedding and the image is precisely $D_{irr}$.

\begin{lemma}\label{lem-restrictiontoirreduciblenodalboundary}
Let $\xi$ be the gluing map and let $p$, $q$ be two identified sections. 
\begin{enumerate}
	\item $\xi^{*}(\kappa) = \kappa + \psi_{p} + \psi_{q}$.
	\item $\xi^{*}(\lambda) = \lambda$.
	\item $\xi^{*}(\psi_{i}) = \psi_{i}$.
	\item $\xi^{*}(D_{i, I}) = D_{i, I} + D_{i-1, I \cup \{p, q\}}$.
	\item $\displaystyle \xi^{*}(D_{irr}) = D_{irr} - \psi_{p} - \psi_{q} + 
	\sum_{p \in I,\; q \notin I}D_{i, I}$.
	\item $\xi^{*}(D_{j=k}) = D_{j=k}$.
\end{enumerate}
\end{lemma}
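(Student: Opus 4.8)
The plan is to compute all six pullbacks by working directly with the universal families. Let $\widetilde{\pi} : \widetilde{\cU} \to \overline{\cM}_{g-1, \cA \cup \{1,1\}}$ be the universal family with its two weight-one sections $\sigma_p, \sigma_q$, let $\widetilde{\omega}$ denote its relative dualizing sheaf, and let $\pi : \cUga \to \cMga$ be the universal family over the target. The gluing construction produces a normalization map $\nu$ from $\widetilde{\cU}$ (pulled back along $\xi$) onto the restricted family over $D_{irr}$, identifying $\sigma_p$ with $\sigma_q$ into the non-separating node. The single fact driving the computation is the adjunction relation for the dualizing sheaf at a node, $\nu^{*}\omega = \widetilde{\omega}(\sigma_p + \sigma_q)$, together with the observation that $\xi$ and $\nu$ fit into a diagram compatible with the two projections, so that pullback along $\xi$ commutes with the fiber integrals $\pi_{*}$ and $\widetilde{\pi}_{*}$ up to this normalization.

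Formulas (2), (3) and (6) are the easy ones, and I would dispatch them first. For (3) and (6) the marked points $x_i$ and the coincidence locus $\{x_j = x_k\}$ lie away from the node, so the relevant cotangent lines and section-coincidence loci are unchanged under gluing; hence $\xi^{*}\psi_i = \psi_i$ and $\xi^{*}D_{j=k} = D_{j=k}$. For (2) I would invoke the standard compatibility of the Hodge bundle with clutching: a section of the dualizing sheaf on the glued curve corresponds to a section on the normalization with opposite residues at $p$ and $q$, and this identification is functorial in families, giving $\xi^{*}\lambda = \lambda$.

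For (1) I would expand the square. Using the adjunction relation, $\xi^{*}\kappa = \widetilde{\pi}_{*}\big(c_1(\widetilde{\omega}) + \sigma_p + \sigma_q\big)^{2}$; expanding and pushing forward term by term with the identities $\psi_i = \widetilde{\pi}_{*}(\omega \cdot \sigma_i) = -\widetilde{\pi}_{*}(\sigma_i^{2})$ from Definition \ref{dfn-tautologicaldivisor}, together with $\widetilde{\pi}_{*}(\sigma_p \cdot \sigma_q) = 0$ (the two weight-one sections are disjoint, since $w_{\{p,q\}} = 2 > 1$), collapses the expansion to $\kappa + \psi_p + \psi_q$. Formula (4) is a boundary bookkeeping argument: the glued curve lies on $D_{i, I}$ exactly when, on the genus $g-1$ source, the separating node witnessing $D_{i,I}$ has either both of $p, q$ on the genus-$(g-i)$ side (contributing the stratum $D_{i, I}$ of the source) or both on the other side, which before gluing is a genus-$(i-1)$ component carrying $I \cup \{p, q\}$ (contributing $D_{i-1, I \cup \{p,q\}}$); these are the two terms.

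The hard part will be the self-intersection formula (5). Here $\xi^{*}\cO(D_{irr})$ splits into a tautological (normal-bundle) piece and a residual intersection piece. The tautological piece is the classical node-smoothing computation: the normal direction to $D_{irr}$ along the image of $\xi$ is the tensor product of the two branch tangent lines at the node, so its first Chern class contributes $-\psi_p - \psi_q$. The residual piece records every way the glued curve acquires a \emph{second} non-separating node, i.e. the loci on the source mapping into the self-intersection of $D_{irr}$: when the source curve is itself irreducible nodal (the term $D_{irr}$), and when the source has a separating node with $p$ and $q$ on opposite components, so that gluing across it produces a second non-separating node (the terms $\sum_{p \in I,\, q \notin I} D_{i, I}$). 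Assembling the two contributions gives $\xi^{*}D_{irr} = D_{irr} - \psi_p - \psi_q + \sum_{p \in I,\, q \notin I} D_{i, I}$. The technical care is in checking that the residual strata appear with multiplicity one and that the normal-bundle class is exactly $-\psi_p - \psi_q$ with the correct sign, which is precisely where the local deformation theory of the node must be invoked.
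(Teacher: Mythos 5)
Your computation is correct, and it follows exactly the standard route that the paper itself defers to: the paper omits proofs for all of Section~\ref{sec-tautologicaldivisor}, citing \cite{AC96, AC98, HM98}, where these clutching pullbacks are derived precisely as you do --- via the normalization of the restricted universal family, the adjunction relation $\nu^{*}\omega = \widetilde{\omega}(\sigma_{p}+\sigma_{q})$ at the node (giving (1) and (2)), disjointness of the weight-one sections, boundary bookkeeping for (4), and the normal-bundle class $-\psi_{p}-\psi_{q}$ plus excess strata with $p$, $q$ separated for the self-intersection formula (5). No gaps; your flagged concern about multiplicity one in (5) is settled in those references by the transversality of the clutching maps to the boundary strata.
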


Finally, for a nonempty subset $I \subset [n]$, assume that $w_{I} \le 1$. 
Let $\cA'$ be a new weight datum defined by replacing $(a_{i})_{i \in I}$
with a single rational number $w_{I} = \sum_{i \in I}a_{i}$. 
We can define an embedding $\chi_{I} : \overline{M}_{g, \cA'} \to 
\cMga$ which sends an $\cA'$-stable curve to the $\cA$-stable curve 
obtained by replacing the point of weight $w_{I}$ 
with $|I|$ points of weight $(a_{i})_{i \in I}$ on the same position. 

\begin{lemma}\label{lem-restrictiontoboundaryofsection}
Let $\chi_{I} : \overline{\cM}_{g, \cA'} \to \cMga$ be the replacing morphism.
Let $p$ be the unique index of $\cA'$ replacing indices in $I$.
\begin{enumerate}
	\item $\chi_{I}^{*}(\kappa) = \kappa$.
	\item $\chi_{I}^{*}(\lambda) = \lambda$.
	\item $\chi_{I}^{*}(\psi_{i}) = \begin{cases}
	\psi_{i}, & i \notin I,\\
	\psi_{p}, & i \in I.
	\end{cases}$
	\item $\chi_{I}^{*}(D_{nod}) = D_{nod}$.
	\item $\chi_{I}^{*}(D_{irr}) = D_{irr}$.
	\item $\chi_{I}^{*}(D_{j=k}) = \begin{cases}
	D_{j=k}, & j, k \notin I,\\
	D_{j=p}, & j \notin I, k \in I,\\
	-\psi_{p}, & j, k \in I.
	\end{cases}$
\end{enumerate}
\end{lemma}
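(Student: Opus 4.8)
The plan is to realize $\chi_I$ as an isomorphism onto the locus in $\cMga$ where the marked points indexed by $I$ all coincide, and then to transport every tautological class through a canonical identification of universal families. First I would set up notation: let $\pi' : \cU_{g,\cA'} \to \overline{\cM}_{g,\cA'}$ be the universal family, with relative dualizing sheaf $\omega'$ and universal sections $\sigma_j'$, where the index $p$ replaces the indices in $I$. The crucial structural observation is that $\chi_I$ leaves the underlying family of curves untouched: an $\cA'$-stable curve, viewed with its weight-$w_I$ point $x_p$ resolved into the $|I|$ points $(x_i)_{i\in I}$ all placed at $x_p$, is exactly an $\cA$-stable curve, and since $w_I \le 1$ the stability of every component is governed by the same total weight, so no component is contracted or created in either direction. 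I would use this to produce a canonical isomorphism $\Phi : \cU_{g,\cA'} \xrightarrow{\sim} \chi_I^*\,\cUga$ over $\overline{\cM}_{g,\cA'}$, under which $\omega$ corresponds to $\omega'$, the section $\sigma_i$ corresponds to $\sigma_p'$ for every $i \in I$, and $\sigma_j$ corresponds to $\sigma_j'$ for $j \notin I$.

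Granting $\Phi$, claims (1)--(3) and (6) become formal consequences of the intersection-theoretic definitions in Definition \ref{dfn-tautologicaldivisor}, together with the compatibility of proper pushforward with pullback in the Cartesian square defining $\chi_I^*\cUga$. Thus $\chi_I^*\kappa = \pi'_*(c_1^2(\omega')) = \kappa$ and $\chi_I^*\lambda = c_1(\pi'_*\omega') = \lambda$, giving (1)--(2); since $\Phi$ carries each $\sigma_i$ ($i\in I$) to $\sigma_p'$ and fixes the other sections, $\chi_I^*\psi_i = \pi'_*(\omega'\cdot\sigma_i') = \psi_i$ for $i\notin I$ and $=\pi'_*(\omega'\cdot\sigma_p') = \psi_p$ for $i\in I$, which is (3). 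For (6) I would use $D_{j=k} = \pi_*(\sigma_j\cdot\sigma_k)$: for $j,k\notin I$ the product pulls back to $\sigma_j'\cdot\sigma_k'$, yielding $D_{j=k}$; for $j\notin I$, $k\in I$ it pulls back to $\sigma_j'\cdot\sigma_p'$, yielding $D_{j=p}$ (with the standing convention that this vanishes when $w_{\{j,p\}}>1$, consistent with the coincidence locus then being empty); and for $j,k\in I$ both sections become $\sigma_p'$, so the product is the self-intersection and $\chi_I^*(D_{j=k}) = \pi'_*((\sigma_p')^2) = -\psi_p$.

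For the nodal boundaries I would handle (4) formally and (5) geometrically. Claim (4) follows from Mumford's relation $\kappa = 12\lambda - D_{nod}$: applying $\chi_I^*$ and invoking (1)--(2) gives $\chi_I^*(D_{nod}) = 12\lambda - \kappa = D_{nod}$, so no further geometry is needed. Claim (5) is not accessible this way and is where the single piece of genuine geometry enters: I would show that the scheme-theoretic preimage $\chi_I^{-1}(D_{irr})$ is reduced and coincides with the boundary $D_{irr}$ of $\overline{\cM}_{g,\cA'}$, using that $\chi_I$ preserves the non-separating nodal structure and that the defining condition of $D_{irr}$ deforms independently of the marked-point coincidence condition cutting out the image of $\chi_I$, so that the intersection is generically transverse and the pullback carries multiplicity one.

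The step I expect to be the main obstacle is the rigorous construction of the family identification $\Phi$ over the \emph{entire} space $\overline{\cM}_{g,\cA'}$, including its boundary: one must verify the no-contraction claim carefully so that $\Phi$ is an isomorphism of families, not merely a birational identification over the interior. The transversality and reducedness statement underlying (5) is the only other point requiring real argument; everything else reduces to the definitions and the projection formula once $\Phi$ is in hand.
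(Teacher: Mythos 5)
The paper offers no proof of this lemma to compare against: Section \ref{sec-tautologicaldivisor} opens by declaring that all proofs there are omitted as simple generalizations of well-known results, deferring to the references listed at the start of that section. Measured on its own merits, your proposal is correct, and it is essentially the standard argument those references employ. The structural heart of your proof is right and is the one nontrivial point: because $w_{I} \le 1$, the $\cA$-stability and $\cA'$-stability conditions impose literally the same inequality on every component (only the sum $\sum a_{i}$ at a point matters), so no component is contracted or sprouted and $\chi_{I}$ lifts to an identification $\cU_{g,\cA'} \cong \chi_{I}^{*}\cUga$ carrying $\omega$ to $\omega'$, $\sigma_{j}$ to $\sigma'_{j}$ for $j \notin I$, and $\sigma_{i}$ to $\sigma'_{p}$ for $i \in I$; items (1)--(3) and (6) then drop out of the intersection-theoretic definitions in Definition \ref{dfn-tautologicaldivisor} together with flat/proper base change, exactly as you say, including the self-intersection $\pi'_{*}((\sigma'_{p})^{2}) = -\psi_{p}$ in the case $j, k \in I$ and the convention that $D_{j=p}$ vanishes when $w_{\{j,p\}} > 1$ (the sections $\sigma'_{j}$ and $\sigma'_{p}$ are then disjoint, so the class is honestly zero). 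Two remarks on your handling of the nodal boundary. Your derivation of (4) from Mumford's relation $\kappa = 12\lambda - D_{nod}$, which the paper records as valid on Hassett spaces just before Lemma \ref{lem-canonicaldivisor}, is a legitimate and slightly slicker shortcut than the usual componentwise computation of $\chi_{I}^{*}(D_{j,J})$ (where one checks that the coincidence locus forces $I \subset J$ or $I \subset J^{c}$ at the divisorial level, giving $D_{j,(J\setminus I)\cup\{p\}}$, respectively $D_{j,J}$, and contributions of codimension at least two otherwise); it is valid here because the lemma is an identity of divisor classes. For (5) your transversality argument is sound --- the node-smoothing parameter is a local equation for $D_{irr}$ independent of the point-coincidence equations cutting out the image of $\chi_{I}$, so the preimage is reduced and equals $D_{irr}$ with multiplicity one --- and alternatively (5) follows from (4) once the componentwise pullbacks of the $D_{j,J}$ are known, so neither step is a gap. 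In short: the proposal fills in, correctly and by the expected method, a proof the paper deliberately left out.
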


\section{A positivity result on families of curves}
\label{sec-positivity}

A key step of the proof of Theorem \ref{thm-mainthmintro} is to construct
an ample divisor on $\cMga$. 
In this section, we prove the following technical positivity result of a divisor,
which will be used to the proof of the main theorem. 

\begin{proposition}\label{prop-positivity}
Fix a weight datum $\cA = (a_{1}, a_{2}, \cdots, a_{n})$ and 
a positive genus $g$. 
Let $B$ be an integral curve. Let $\pi : \cU \to B$ be a flat family of 
$\cA$-stable genus $g$ curves and let $\sigma_{i} : B \to \cU$ 
for $i = 1, 2, \cdots, n$ be $n$ sections.
Suppose that a general fiber of $\pi$ is smooth. 
Then there exists a positive rational 
number $\epsilon_{g, \cA} > 0$ which depends only on $g$ and $\cA$ 
such that 
\begin{equation}\label{eqn-kappapsiinequality}
	(2\kappa + \psi) \cdot B \ge \epsilon_{g, \cA}\cdot \mathrm{mult}_{x}B
\end{equation}
for any point $x \in B$.
\end{proposition}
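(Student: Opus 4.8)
The plan is to read \eqref{eqn-kappapsiinequality} as a Seshadri-type estimate for the class $2\kappa+\psi$ and to prove it by localising the intersection number at $x$. First I would pass to a convenient model: replacing $B$ by the normalisation of a finite cover and $\cU$ by a resolution of its total space changes both sides of \eqref{eqn-kappapsiinequality} in a controlled way, so I may assume that $B$ is a smooth projective curve, that $\cU$ is a smooth surface, and that $\pi:\cU\to B$ is a relatively minimal fibration whose generic fibre is a smooth genus $g$ curve. If the induced map $f:B\to\cMga$ is constant then both sides vanish, so I assume $f$ is non-constant and interpret $\mathrm{mult}_x B$ as the multiplicity of the image curve $f(B)$ at $f(x)$.

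Next I would rewrite the left side on the surface $\cU$. Writing $\omega_{\cU/B}$ for the relative dualising sheaf, the defining formulas $\kappa\cdot B=\omega_{\cU/B}^2$ and $\psi_i\cdot B=\omega_{\cU/B}\cdot\sigma_i$ give
\[
	(2\kappa+\psi)\cdot B
	= 2\,\omega_{\cU/B}^2+\omega_{\cU/B}\cdot\sum_{i=1}^{n}\sigma_i
	= \omega_{\cU/B}\cdot\Bigl(2\omega_{\cU/B}+\sum_{i=1}^{n}\sigma_i\Bigr).
\]
The factor $2\omega_{\cU/B}+\sum_i\sigma_i$ has non-negative degree on every fibre component: on a component $Z$ of genus $g_Z$ meeting the rest of its fibre in $k$ nodes and carrying the marked points $i\in J$ this degree is $2(2g_Z-2+k)+|J|$, and the $\cA$-stability inequality $2g_Z-2+k+w_J>0$ makes it non-negative in every case, including the genus-zero tails and bridges that are admissible only in Hassett's sense. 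This fibrewise positivity is the reason the weight-one combination $2\omega_{\cU/B}+\sum_i\sigma_i$, and not $\omega_{\cU/B}$ alone, is the correct object; moreover the nefness of each $\psi_i$ gives $\psi_i\cdot B=\omega_{\cU/B}\cdot\sigma_i\ge0$.

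The central estimate comes from Mumford's relation $\kappa=12\lambda-D_{nod}$, which turns the left side into $24\lambda\cdot B-2D_{nod}\cdot B+\psi\cdot B$. For $g\ge2$ I would invoke the slope inequality $D_{nod}\cdot B\le\tfrac{8g+4}{g}\,\lambda\cdot B$, valid for fibrations with smooth generic fibre (applied to the stable model of $\pi$). A short computation then gives $24\lambda\cdot B-2D_{nod}\cdot B\ge\tfrac{2g-2}{2g+1}\,D_{nod}\cdot B$, so that
\[
	(2\kappa+\psi)\cdot B\ \ge\ \frac{2g-2}{2g+1}\,D_{nod}\cdot B+\psi\cdot B.
\]
Both $D_{nod}\cdot B$ and $\psi\cdot B$ are sums of non-negative local contributions over the points of $B$, the first counting the nodes $\delta_x$ of the fibre over each $x$ and the second measuring how the sections meet the special fibres. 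It then remains to bound $\mathrm{mult}_x B$ above by a fixed multiple of $\delta_x$ together with the number of section collisions at $x$, which a local study of $f$ near $x$ provides; since there are only finitely many topological types of $\cA$-stable genus $g$ fibres and finitely many local node and collision configurations, taking the minimum of the finitely many constants produced this way yields a single $\epsilon_{g,\cA}>0$ depending only on $g$ and $\cA$.

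I expect the real work to sit in two places. The first is the genus one case and, inside higher genus, the rational-tail, rational-bridge and elliptic-tail degenerations, where the slope inequality is vacuous and $\omega_{\cU/B}^2$ contributes nothing---indeed a relatively minimal elliptic fibration has $\omega_{\cU/B}^2=0$---so the entire lower bound must be extracted from $\psi\cdot B=\sum_i\omega_{\cU/B}\cdot\sigma_i$. Here one must use the hypothesis $2g-2+\sum_i a_i>0$ to guarantee that some section meets each otherwise-unstable component and convert this into a uniform positive contribution. The second is the bookkeeping forced by colliding sections, that is the strata with $w_I\le1$ and the divisors $D_{i=j}$ and $D_{sec}$: there the $\sigma_i$ are no longer disjoint, the identities above acquire correction terms, and the precise comparison between $\mathrm{mult}_x B$ and the node-and-collision count must be made carefully. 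It is exactly in making this comparison uniform---independent of the particular family $B$---that the dependence of $\epsilon_{g,\cA}$ on only $g$ and $\cA$ is won.
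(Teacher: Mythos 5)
Your first half checks out---the identity $(2\kappa+\psi)\cdot B=\omega_{\cU/B}\cdot(2\omega_{\cU/B}+\sum_i\sigma_i)$, the fibrewise non-negativity of $2\omega_{\cU/B}+\sum_i\sigma_i$ under $\cA$-stability, and the arithmetic $24\lambda\cdot B-2D_{nod}\cdot B\ge\frac{2g-2}{2g+1}D_{nod}\cdot B$ from the slope inequality are all correct---but the final step, which is where the proposition actually lives, is unfounded and in fact false. You propose to ``bound $\mathrm{mult}_{x}B$ above by a fixed multiple of $\delta_{x}$ together with the number of section collisions at $x$.'' No local study of $f$ can provide such a bound: $\mathrm{mult}_{x}B$ measures the singularity of the curve $B$ itself inside the moduli space and is completely independent of fibre degenerations. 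Take $B$ a curve lying entirely in the interior $\cM_{g,\cA}$ (all fibres smooth, sections disjoint) with a point of multiplicity $m$ for $m$ arbitrarily large: then $\delta_{x}=0$, there are no collisions, and your inequality reduces to $(2\kappa+\psi)\cdot B\ge\psi\cdot B$, which says nothing about $m$. More generally, your argument only ever produces lower bounds by non-negative quantities ($D_{nod}\cdot B$ and $\psi\cdot B$), i.e.\ nefness-type statements; a Seshadri-type estimate of the form \eqref{eqn-kappapsiinequality} cannot be assembled from such local fibre data alone.

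The missing engine is exactly what the paper imports. The multiplicity term must come from a statement whose right-hand side already contains $\mathrm{mult}_{x}B$: the paper uses Cornalba's refinement of the slope inequality (Theorem \ref{thm-Cornalba}, i.e.\ \cite[Lemma 3.2]{Cor93}), which, via Hilbert stability of a suitable fibrewise polarization, bounds $\frac{1}{M}\mathrm{mult}_{x}B$ by an explicit combination of $(L\cdot L)$, $(L\cdot\omega)$, $\deg\pi_{*}(L)$ and $\deg\lambda$. The actual proof runs by induction on the number of sections via reduction and forgetful morphisms (your pushforward reductions parallel this): when $\rho(B)$ remains a curve one applies the inductive hypothesis, and when $\rho(B)$ is a point---the family is isotrivial as abstract curves, so $\lambda\cdot B=\kappa\cdot B=D_{nod}\cdot B=0$ and your slope-inequality estimate is vacuous---one applies Theorem \ref{thm-Cornalba} with $L=\omega^{k}(\sigma_{n})$ (for $g\ge 2$; $L=\cO(k\sigma_{n})$ for $g=1$), where the left side computes to a positive multiple of $\psi_{n}\cdot B$; this is the only mechanism in the paper that converts $\psi\cdot B$ into a multiplicity bound, precisely at the degenerations you flag as ``the real work.'' Finally, the interior/base case needs the Cornalba--Harris ampleness of $\kappa$ on $\cMg$ (and $\psi_{1}=\frac{1}{12}\lambda$ ample on $\overline{\cM}_{1,1}$) together with Seshadri's criterion (Theorem \ref{thm-Seshadricriterion}) to handle high-multiplicity points of curves with smooth fibres. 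Without Cornalba's multiplicity inequality or an equivalent ampleness input, your outline cannot close.
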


\begin{remark}
\begin{enumerate}
	\item Proposition \ref{prop-positivity} does \emph{not} imply 
	that $2\kappa + \psi$ 
	is ample on $\cMga$ even though the statement is similar to Seshadri's 
	ampleness criterion (Theorem \ref{thm-Seshadricriterion}). 
	Note that there is an assumption that a general fiber should be smooth.
	\item Proposition \ref{prop-positivity} is not true when $g = 0$. 
	Indeed, $K_{\cMzn} \equiv 2\kappa + \psi$
	(\cite[Lemma 2.6]{Moo11a}).
	It is well-known that for $n = 4, 5$, 
	$K_{\cMzn}$ is anti-ample. So it intersects \emph{negatively} 
	with every curve.
\end{enumerate}
\end{remark}

We will use following two positivity results. 

\begin{theorem}[Seshadri's criterion, {\cite[Theorem 1.4.13]{Laz04a}}]
\label{thm-Seshadricriterion}
Let $X$ be a projective variety and $D$ is a divisor on $X$.
Then $D$ is ample if and only if there exists a positive number $\epsilon > 0$
such that 
\[
	D \cdot C \ge \epsilon \cdot \mathrm{mult}_{x}C
\]
for every point $x \in C$ and every irreducible curve $C \subset X$.
\end{theorem}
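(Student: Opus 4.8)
The plan is to prove the two implications separately; the forward one is elementary, while the converse carries the real content. For the forward implication, suppose $D$ is ample. Then $mD$ is very ample for some $m > 0$, giving an embedding $X \hookrightarrow \PP^{N}$ with $\cO_{X}(1) = \cO_{X}(mD)$, so that for every irreducible curve $C \subset X$ one has $\deg C = (\cO(1)\cdot C) = m(D\cdot C)$. I would then invoke the classical bound $\mathrm{mult}_{x}C \le \deg C$, valid for any curve in projective space and any point on it (a general hyperplane through $x$ meets $C$ in $\deg C$ points counted with multiplicity, of which at least $\mathrm{mult}_{x}C$ are concentrated at $x$). Combining these gives $D\cdot C = \tfrac{1}{m}\deg C \ge \tfrac{1}{m}\mathrm{mult}_{x}C$, so $\epsilon = 1/m$ works.

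For the converse, assume the Seshadri inequality holds with constant $\epsilon > 0$; observe first that taking any $x \in C$, where $\mathrm{mult}_{x}C \ge 1$, already yields $D\cdot C \ge \epsilon > 0$ for every irreducible curve. I would verify ampleness through the Nakai--Moishezon criterion, i.e.\ show $(D^{k}\cdot V) > 0$ for every irreducible $k$-dimensional $V \subseteq X$. Fix such a $V$, choose a smooth point $x \in V$, and let $\mu : W = \mathrm{Bl}_{x}V \to V$ be the blow-up with exceptional divisor $E \cong \PP^{k-1}$. The key claim is that $N := \mu^{*}D - \epsilon E$ is nef on $W$. To see this I would run through the three types of irreducible curve on $W$: the strict transform $\tilde{C}$ of a curve $C \ni x$ gives $N\cdot\tilde{C} = D\cdot C - \epsilon\,\mathrm{mult}_{x}C \ge 0$ by hypothesis (using $\mu^{*}D\cdot\tilde{C} = D\cdot C$ and $E\cdot\tilde{C} = \mathrm{mult}_{x}C$); the strict transform of a curve missing $x$ gives $N\cdot\tilde{C} = D\cdot C > 0$; and a curve inside $E$ gives $N\cdot\tilde{C} = \epsilon\deg\tilde{C} > 0$, since $\mu^{*}D|_{E} = 0$ and $E|_{E} = \cO_{\PP^{k-1}}(-1)$.

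Granting the claim, I would expand the top self-intersection
\[
	(N^{k}) = (D^{k}\cdot V) + \sum_{i=1}^{k}\binom{k}{i}(-\epsilon)^{i}\,(\mu^{*}D)^{k-i}\cdot E^{i}.
\]
Every middle term with $1 \le i \le k-1$ vanishes because $\mu^{*}D|_{E} = 0$, while the top term equals $(-\epsilon)^{k}(E^{k}) = -\epsilon^{k}$, using $(E^{k}) = ((E|_{E})^{k-1}) = (-1)^{k-1}$. Hence $(N^{k}) = (D^{k}\cdot V) - \epsilon^{k}$. Since a nef divisor on a $k$-dimensional projective variety has nonnegative top self-intersection (nef classes are limits of ample ones and the self-intersection is a continuous function of the class), $(N^{k}) \ge 0$, and therefore $(D^{k}\cdot V) \ge \epsilon^{k} > 0$. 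As $V$ was arbitrary, the Nakai--Moishezon criterion gives that $D$ is ample.

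The main obstacle is the converse direction, and within it the nefness claim for $N$: this is the only place where the Seshadri hypothesis is used, and it requires care to confirm that the three intersection computations exhaust every irreducible curve on $W$ and that the identities $E\cdot\tilde{C} = \mathrm{mult}_{x}C$ and $E|_{E} = \cO_{\PP^{k-1}}(-1)$ hold. Choosing $x$ to be a smooth point of $V$, which is possible because the smooth locus is dense, is exactly what guarantees $E \cong \PP^{k-1}$ with the stated normal bundle and keeps $E$ a Cartier divisor, so that the intersection numbers above are legitimate even when $V$ is singular away from $x$.
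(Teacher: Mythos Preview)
The paper does not supply its own proof of this statement; it is quoted as a known result from Lazarsfeld's \emph{Positivity in Algebraic Geometry I} \cite[Theorem 1.4.13]{Laz04a} and used as a black box in Sections~\ref{sec-positivity} and~\ref{sec-proof}. Your argument is correct and is essentially the standard proof found in that reference: the forward implication via a very ample multiple and the degree bound $\mathrm{mult}_{x}C \le \deg C$, and the converse via Nakai--Moishezon, blowing up each test subvariety $V$ at a smooth point, verifying that $\mu^{*}D - \epsilon E$ is nef on the blow-up, and deducing $(D^{k}\cdot V) \ge \epsilon^{k}$ from the nonnegativity of top self-intersections of nef divisors.
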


\begin{theorem}\label{thm-Cornalba}
\cite[Lemma 3.2]{Cor93}
There are positive integers $h$ and $M$ depending on $r$ and $d$,
such that the following statement 
holds for any flat family $\pi : \cU \to B$ of nodal curves
over any integral curve $B$. Let $L$ be a relative 
degree $d$ line bundle on $\cU$. Suppose that $\pi$ is not isotrivial as a 
family of \emph{polarized} curves. Moreover, assume that 
\begin{enumerate}
	\item a general fiber is smooth,
	\item $R^{1}\pi_{*}(L^{i}) = 0$ for $i >> 0$ 
	and $r := \dim H^{0}(\cU_{b}, L_{\cU_{b}})$ 
	is independent of $b \in B$,
	\item For a general $b \in B$, $L_{\cU_{b}}$ is base-point-free, 
	very ample and embeds $\cU_{b}$ in $\PP^{r-1}$ as a Hilbert stable 
	subscheme. 
\end{enumerate}
Then 
\begin{equation}\label{eqn-multiplicityinequality}
\begin{split}
	&\left(\frac{r}{2}(L \cdot L) - d (\deg \pi_{*}(L))\right) h^{2}\\
	&+ \left((g-1)(\deg \pi_{*}(L)) - \frac{r}{2}(L \cdot \omega )\right) h
	+ r \deg \lambda \ge \frac{1}{M} \mathrm{mult}_{x}B
\end{split}
\end{equation}
for every point $x \in B$.
\end{theorem}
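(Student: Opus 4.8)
The plan is to run the Cornalba--Harris method: convert Hilbert stability of the general fibre into the effectivity of an explicit line bundle on $B$ whose degree is exactly the left-hand side of \eqref{eqn-multiplicityinequality}, and then upgrade effectivity to the quantitative multiplicity bound by a local weight estimate. Write $V := \pi_{*}L$ and $W := \pi_{*}L^{h}$ for an integer $h = h(r,d)$ to be fixed below. By hypothesis (2), $V$ is locally free of rank $r$ with $\deg V = \deg\pi_{*}L$, and for $h$ large the sheaf $W$ is locally free of rank $P := P(h) = dh - g + 1$. The first step is to compute $\deg W$. Applying Grothendieck--Riemann--Roch to $\pi : \cU \to B$ (a family of nodal curves over a curve, so the total space is a surface) gives
\[
	\deg\pi_{*}L^{m} = \frac{m^{2}}{2}(L\cdot L) - \frac{m}{2}(L\cdot\omega) + \deg\lambda
	\qquad (m \gg 0),
\]
the constant term being $\deg\lambda$ by the relative Noether formula. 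Substituting $m = h$ and forming
\[
	\Lambda := (\det W)^{\otimes r}\otimes(\det V)^{\otimes(-hP)},
\]
a short computation shows that $\deg\Lambda = r\deg W - hP\deg V$ equals precisely the left-hand side of \eqref{eqn-multiplicityinequality}. Thus the theorem reduces to producing a section of $\Lambda$ (or a fixed power) vanishing to order at least $\frac{1}{M}\mathrm{mult}_{x}B$ at each $x$.

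To produce the section I would relativise the Cornalba--Harris construction. The multiplication maps assemble into a morphism of vector bundles $\rho_{h} : \mathrm{Sym}^{h}V \to W$ on $B$, which by hypothesis (3) (projective normality of the general fibre for $h = h(r,d)$ large) is generically surjective; taking top exterior powers gives the relative $h$-th Hilbert point, a section
\[
	\wedge^{P}\rho_{h} \in \Gamma\!\left(B,\ (\wedge^{P}\mathrm{Sym}^{h}V)^{\vee}\otimes\det W\right).
\]
Hilbert stability of the general fibre means this point is $SL(V_{b})$-semistable for general $b$, so there is a nonzero $SL_{r}$-invariant $F$ of some degree $k = k(r,d)$ that does not vanish at the general Hilbert point. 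Evaluating $F$ on $\wedge^{P}\rho_{h}$ and bookkeeping the central $GL(V)$-weight (which is arranged to be a power of $\det V$) yields a section $s_{F}$ of $\Lambda^{\otimes k'}$ that is not identically zero. Effectivity of its divisor already gives $\deg\Lambda \ge 0$; here the non-isotriviality hypothesis guarantees that $s_{F}$ genuinely vanishes rather than being a nowhere-zero section.

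The final and hardest step is the local estimate $\mathrm{ord}_{x}(s_{F}) \ge \frac{1}{M}\mathrm{mult}_{x}B$ at every $x \in B$, which, read off from the effective divisor of $s_{F}$, yields \eqref{eqn-multiplicityinequality}. I would prove it by a Hilbert--Mumford weight computation in the complete local ring at $x$: passing to a suitable ramified cover one realises the degeneration of the Hilbert point as $b \to x$ through a one-parameter subgroup, and the order of vanishing of $s_{F}$ is the $F$-weight of that degeneration, which the numerical criterion bounds below by a fixed fraction of the \emph{length} of the degeneration, i.e. of $\mathrm{mult}_{x}B$. The uniform denominator $M = M(r,d)$ arises because the weights of a basis of the finitely many invariants of degree $k$ on the fixed $SL_{r}$-representation have bounded denominators.

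Controlling this local weight uniformly --- decoupling it from the specific geometry of the degenerate fibre while keeping $M$ dependent only on $r$ and $d$ --- is the main obstacle. The reduction carried out in the first two paragraphs, by contrast, is essentially Riemann--Roch bookkeeping together with the standard relativisation of the Cornalba--Harris invariant.
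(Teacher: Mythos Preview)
The paper does not prove this theorem at all: it is quoted verbatim as \cite[Lemma~3.2]{Cor93} and used as a black box in the proof of Proposition~\ref{prop-positivity}. The only commentary the paper offers is the remark immediately following the statement, noting that the genus hypothesis in Cornalba's original is unnecessary and that the displayed inequality \eqref{eqn-multiplicityinequality} is a minor correction of the formula printed in \cite{Cor93}. So there is no in-paper proof to compare your proposal against.

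That said, your outline is a faithful reconstruction of the Cornalba--Harris method as refined in \cite{Cor93}: the Grothendieck--Riemann--Roch computation you give does produce exactly the left-hand side of \eqref{eqn-multiplicityinequality} as $\deg\Lambda$ (your arithmetic is correct, with $P=dh-g+1$), and the passage from Hilbert semistability of the generic fibre to a nonzero section of a power of $\Lambda$ via an $SL_{r}$-invariant is the standard step from \cite{CH88}. The genuinely new ingredient in \cite{Cor93} over \cite{CH88} is precisely the local multiplicity estimate you flag as the ``main obstacle''; your description of it (one-parameter degeneration, Hilbert--Mumford weight bounded below by a uniform fraction of $\mathrm{mult}_{x}B$) is correct in spirit but would need the explicit weight bookkeeping of \cite[Section~3]{Cor93} to be made rigorous. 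In short: your sketch is the right proof, but the paper itself simply cites it.
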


\begin{remark}
\begin{enumerate}
	\item In \cite{Cor93}, Cornalba assumed that $g \ge 2$, 
	but the proof of the theorem shows that this result is true 
	without the assumption. See \cite[Section 3]{Cor93} and 
	\cite[Section 2]{CH88}.
	\item If $d \ge 2g > 0$, then by \cite[Theorem 4.34]{HM98}
	a smooth curve is automatically Hilbert stable. 
	Note that the same proof can be applied to $g = 1$ case, too.
	\item Even if a given family of curves is isotrivial as a family of 
	\emph{abstract} curves, we can apply the theorem 
	if the family is not isotrivial as a family of \emph{polarized} curves. 
	\item The equation \eqref{eqn-multiplicityinequality} is different from 
	\cite[(3.3)]{Cor93}. 
	But if we follow the proof, the right formula is 
	\eqref{eqn-multiplicityinequality}.
\end{enumerate}
\end{remark}

\begin{proof}[Proof of Proposition \ref{prop-positivity}]
We will divide the proof into several steps. 

\medskip

\textsf{Step 1.} It is sufficient to show the result for a weight datum 
$n\cdot \tau = (\tau, \tau, \cdots, \tau)$ for sufficiently small $\tau > 0$
satisfying $n \cdot \tau \le 1$.

\medskip

Let $\varphi_{\cA, n \cdot \tau} : \cMga \to \overline{\cM}_{g, n \cdot \tau}$
be the reduction morphism.
By the assumption that a general fiber of $\pi$ is smooth, 
$\overline{B} := \varphi_{\cA, n \cdot \tau}(B)$ is an integral curve in 
$\overline{\cM}_{g, n \cdot \tau}$. 
By the projection formula, 
\begin{equation}
\begin{split}
	(2\kappa + \psi) \cdot \overline{B}
	= &\; \varphi_{\cA, n \cdot \tau}^{*}(2\kappa + \psi) \cdot B
	= (2\kappa + 2\sum_{w_{I} > 1}D_{0, I}
	+ \psi - |I| \sum_{w_{I} > 1}D_{0, I}) \cdot B\\
	= &\; (2\kappa + \psi) \cdot B 
	- (|I|- 2)\sum_{w_{I} > 1}D_{0, I} \cdot B
	\le (2\kappa + \psi) \cdot B,
\end{split}
\end{equation}
because $|I| \ge 2$.
Thus if the result is true for the weight datum $n \cdot \tau$, then 
\[
	(2\kappa + \psi) \cdot B \ge (2\kappa + \psi) \cdot \overline{B}
	\ge \epsilon_{g, n \cdot \tau}\cdot 
	\mathrm{mult}_{\varphi_{\cA, n \cdot \tau}(x)}
	\overline{B} \ge \epsilon_{g, n \cdot \tau}\cdot \mathrm{mult}_{x}B.
\]
Therefore if we define $\epsilon_{g, \cA} := \epsilon_{g, n \cdot \tau}$, 
the proposition holds.

\medskip

\textsf{Step 2.} We can reduce the number of sections.

\medskip

Let $\rho : \overline{\cM}_{g, n \cdot \tau} \to 
\overline{\cM}_{g, (n-1) \cdot \tau}$ be the forgetful morphism.
There are two possible cases. 
If $\overline{B} := \rho(B)$ is a curve, then 
\[
	(2\kappa + \psi) \cdot \overline{B} = 
	\rho^{*}(2\kappa + \psi) \cdot B = 
	(2\kappa + \psi) \cdot B.
\]
Thus $(2\kappa + \psi) \cdot B = (2\kappa + \psi) \cdot \overline{B} 
\ge \epsilon_{g, (n-1)\cdot \tau}\cdot \mathrm{mult}_{\pi(x)}\overline{B} \ge
\epsilon_{g, (n-1)\cdot \tau}\cdot \mathrm{mult}_{x}B$.

If $\rho(B)$ is a point, then the family $\pi : \cU \to B$ is isotrivial 
as a family of abstract curves after forgetting the last section.
For $g \ge 2$, we will use Theorem \ref{thm-Cornalba} 
with $L = \omega^{k}(\sigma_{n})$ 
for sufficiently large $k$. 
Then $d = 2k(g-1)+1$, $r = (2k-1)(g-1) +1$ and $L$ satisfies all 
assumptions in Theorem \ref{thm-Cornalba}.
Note that $(\pi : \cU \to B, L)$ is not isotrivial as a family of 
polarized curves.

By the Riemann-Roch theorem, 
\[
	\deg \pi_{*}(L) = \frac{(L \cdot L)}{2} - \frac{(L \cdot \omega)}{2}
	+ \deg \lambda,
\]
since $R^{1}\pi_{*}(L) = 0$.
Note that in our situation, $\deg \lambda = \lambda \cdot B = 0$ and 
$D_{nod} \cdot B = 0$, $\psi_{i} \cdot B = 0$ for $i = 1, 2, \cdots, n-1$.
Thus $\kappa \cdot B = 0$ by Mumford's relation and $\psi \cdot B 
= \psi_{n} \cdot B$.
So it is straightforward to check that 
\[
	\left(\frac{r}{2}(L \cdot L) - d (\deg \pi_{*}(L))\right) h^{2}
	+ \left((g-1)(\deg \pi_{*}(L)) - \frac{r}{2}(L \cdot \omega )\right) h
	+ r \deg \lambda
\]
\[
	 = \frac{gh(h-1)+2h}{2}\psi_{n}.
\]
Therefore 
\[
	(2\kappa +\psi) \cdot B = \psi_{n} \cdot B \ge \alpha\cdot 
	\mathrm{mult}_{x}B
\]
for some $\alpha > 0$ by Theorem \ref{thm-Cornalba}.

When $g = 1$, let $L = \cO(k\sigma_{n})$ for sufficiently large $k$.
Then $d = r = k$ and $L$ satisfies all assumptions in Theorem 
\ref{thm-Cornalba}.
In this case, 
\[
	\left(\frac{r}{2}(L \cdot L) - d (\deg \pi_{*}(L))\right) h^{2}
	+ \left((g-1)(\deg \pi_{*}(L)) - \frac{r}{2}(L \cdot \omega )\right) h
	+ r \deg \lambda
\]
\[
	= \frac{k^{2}h(h-1)}{2}\psi_{n}.
\]
So we can find $\epsilon_{g, n \cdot \tau} > 0$ by taking the minimum of 
these cases.

\medskip

\textsf{Step 3.} For $\overline{\cM}_{g, (\tau)} \cong \overline{\cM}_{g, 1}$, 
the proposition holds. 

\medskip

First of all, suppose that $g \ge 2$. 
Let $\rho : \overline{\cM}_{g, 1} \to \overline{\cM}_{g}$ be the forgetting 
morphism. If $\overline{B} = \rho(B)$ is a curve, then 
\[
	(2\kappa + \psi) \cdot B = 
	2\rho^{*}(\kappa) \cdot B + \psi \cdot B = 
	2\kappa \cdot \overline{B} + \pi_{*}(\omega \cdot \sigma_{1}).
\]
The divisor $\kappa$ is ample on $\cMg$ by \cite[Theorem 1.3]{CH88}. 
By Seshadri's criterion (Theorem \ref{thm-Seshadricriterion}), 
there is a positive number $\alpha > 0$
such that $\kappa \cdot \overline{B} \ge \alpha 
\cdot \mathrm{mult}_{x}\overline{B}$ 
for every irreducible curve $\overline{B}$ and $x \in \overline{B}$. 

On the other hand, let $\pi' : \cU' \to \overline{B}$ be the 
corresponding family of stable curves. 
Then there is a stabilization morphism $\tilde{\rho} : \cU \to \cU'$ 
and $\omega = \tilde{\rho}^{*}(\omega) + E$ where $E$ is an 
exceptional curve. 
Now 
\[
	\pi_{*}(\omega \cdot \sigma_{1}) = 
	\pi^{*}((\tilde{\rho}^{*}(\omega) + E) \cdot \sigma_{1})> 0
\]
because $\omega$ is ample on $\cU'$ by \cite[Proposition 3.2]{Ara71}, 
and $E \cdot \sigma_{1} > 0$. 

If $\pi : \cU \to B$ is isotrivial after forgetting the section $\sigma_{1}$, 
then by exactly same argument in \text{Step 2}, we can get the inequality
\eqref{eqn-kappapsiinequality}. 

On $\overline{\cM}_{1,1}$, $\kappa = 0$ and $\psi_{1} = 
\frac{1}{12}\lambda$ is ample (\cite[Theorem 2.2]{AC98}, 
note that $\kappa_{1}$ in \cite{AC98} is $\kappa + \psi_{1}$.)
Therefore we obtain $\epsilon_{1,(1)} > 0$ and the inequality 
\eqref{eqn-kappapsiinequality} by Seshadri's criterion.
\end{proof}

\section{Proof of the main theorem}
\label{sec-proof}

In this section, we prove our main result.

\begin{theorem}\label{thm-mainthm}
Let $\cA = (a_{1}, a_{2}, \cdots, a_{n})$ be a weight datum
satisfying $2g - 2 + \sum_{i=1}^{n}a_{i} > 0$. 
Then 
\[	
	\cMgn(K_{\cMgn} + 11\lambda + \sum_{i=1}^{n}a_{i}\psi_{i}) 
	\cong \Mga
\]
where $\Mga$ is the coarse moduli space of the moduli space $\cMga$ of 
$\cA$-stable curves.
\end{theorem}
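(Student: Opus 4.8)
The plan is to rewrite the log canonical divisor in terms of the tautological classes $\kappa$ and $\psi_{i}$, recognize it as the pullback of a natural ample divisor on $\cMga$ up to an effective exceptional correction, and then conclude by a section-ring comparison along the reduction morphism $\varphi_{\cA}$.

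First I would simplify the divisor on $\cMgn = \overline{\cM}_{g,(1,\cdots,1)}$. Using Lemma \ref{lem-canonicaldivisor} in the form $K_{\cMgn} = \tfrac{13}{12}\kappa - \tfrac{11}{12}D_{nod} + \psi$ together with Mumford's relation $\kappa = 12\lambda - D_{nod}$, the term $11\lambda = \tfrac{11}{12}(\kappa + D_{nod})$ exactly cancels the boundary contribution, so that $K_{\cMgn} + 11\lambda = 2\kappa + \psi$. Hence
\[
	D := K_{\cMgn} + 11\lambda + \sum_{i=1}^{n} a_{i}\psi_{i} = 2\kappa + \sum_{i=1}^{n}(1+a_{i})\psi_{i}.
\]
This already explains the appearance of $2\kappa + \psi$ in Proposition \ref{prop-positivity}.

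Next I would introduce the candidate divisor $H_{\cA} := 2\kappa + \sum_{i=1}^{n}(1+a_{i})\psi_{i}$ on $\cMga$ and prove it is ample (this is Proposition \ref{prop-ample}). I would verify Seshadri's criterion (Theorem \ref{thm-Seshadricriterion}), checking $H_{\cA}\cdot B \ge \epsilon\,\mathrm{mult}_{x}B$ for every irreducible curve $B$. When the general fiber of the family over $B$ is smooth, the inequality follows from Proposition \ref{prop-positivity} applied to $2\kappa + \psi$, since the residual term $\sum a_{i}\psi_{i}$ is nef. For curves contained in the boundary I would induct on the dimension of the moduli space by restricting to the boundary strata: the formulas of Lemmas \ref{lem-restrictiontonodalboundary}, \ref{lem-restrictiontoirreduciblenodalboundary} and \ref{lem-restrictiontoboundaryofsection} show that $H_{\cA}$ restricts to a divisor of the same shape, namely $\xi^{*}H_{\cA} = H_{\cA\cup\{1,1\}}$, $\eta_{i,I}^{*}H_{\cA} = \pi_{1}^{*}H_{\cA_{I}} + \pi_{2}^{*}H_{\cA_{I^{c}}}$ (a sum of ample pullbacks on a product, hence ample), and $\chi_{I}^{*}H_{\cA} = H_{\cA'} + (|I|-1)\psi_{p}$ (ample plus nef). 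Taking the minimum of the finitely many Seshadri constants produced yields a uniform $\epsilon$.

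With $H_{\cA}$ ample, I would compare $D$ with $\varphi_{\cA}^{*}H_{\cA}$. The pullback formulas of Corollary \ref{cor-pushforwardpullbackreduction} give
\[
	\varphi_{\cA}^{*}H_{\cA} = 2\kappa + \sum_{i=1}^{n}(1+a_{i})\psi_{i} - \sum_{w_{I}\le 1}(|I| + w_{I} - 2)\,D_{0,I},
\]
so that $D - \varphi_{\cA}^{*}H_{\cA} = \sum_{w_{I}\le 1,\,|I|\ge 2}(|I| + w_{I} - 2)\,D_{0,I} =: E$. Since $|I|\ge 2$ and $w_{I} > 0$, every coefficient is positive, so $E$ is effective; its support is exactly the set of boundary divisors contracted by $\varphi_{\cA}$, hence $E$ is $\varphi_{\cA}$-exceptional. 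Finally I would conclude by the standard section-ring argument: because $E$ is effective and $\varphi_{\cA}$-exceptional and $\cMga$ is normal, $\varphi_{\cA *}\mathcal{O}(\lfloor kE\rfloor) = \mathcal{O}_{\cMga}$, so the projection formula gives $H^{0}(\cMgn, \mathcal{O}(\lfloor kD\rfloor)) = H^{0}(\cMga, \mathcal{O}(\lfloor kH_{\cA}\rfloor))$ for all $k\ge 0$. As $H_{\cA}$ is ample, the right-hand ring is finitely generated with $\proj$ equal to $\Mga$; thus $D$ is big and $\cMgn(D)\cong\Mga$, which is Theorem \ref{thm-mainthm}. The principal obstacle is the ampleness step: Proposition \ref{prop-positivity} only controls families with smooth general fiber, so the boundary induction—together with the verification that the restricted divisors keep the same shape so that a uniform Seshadri constant survives—is where the real work lies.
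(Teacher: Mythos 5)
Your reduction $K_{\cMgn}+11\lambda+\sum a_{i}\psi_{i}=2\kappa+\sum(1+a_{i})\psi_{i}$ and your pullback computation $\varphi_{\cA}^{*}H_{\cA}=2\kappa+\sum(1+a_{i})\psi_{i}-\sum_{w_{I}\le 1}(|I|+w_{I}-2)D_{0,I}$ are both correct, but the final descent step contains a genuine error. Your correction divisor is $E=\sum_{w_{I}\le 1}(|I|+w_{I}-2)D_{0,I}$, and you assert that its support consists exactly of the boundary divisors contracted by $\varphi_{\cA}$. That is false: for $|I|=2$ the coefficient is $w_{I}>0$, so these components appear in $E$, yet by Corollary \ref{cor-pushforwardpullbackreduction}(4) the map $\varphi_{\cA}$ sends $D_{0,\{j,k\}}$ with $w_{\{j,k\}}\le 1$ \emph{onto the divisor} $D_{j=k}\subset\cMga$ — it is not exceptional. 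Consequently $\varphi_{\cA *}\cO(\lfloor kE\rfloor)\ne\cO_{\cMga}$ and the claimed identification $H^{0}(\cMgn,\cO(\lfloor kD\rfloor))=H^{0}(\cMga,\cO(\lfloor kH_{\cA}\rfloor))$ does not follow. This is precisely why the paper works with the push-forward $\varphi_{\cA *}(\Delta_{\cA})=H_{\cA}+\sum_{w_{\{i,j\}}\le 1}w_{\{i,j\}}D_{i=j}$ rather than with bare $H_{\cA}$: when one pulls that divisor back, the $|I|=2$ contributions cancel and the residual divisor is $\sum_{w_{I}\le 1}(|I|-2)(1-w_{I})D_{0,I}$, whose coefficients vanish for $|I|=2$, so it is supported only on the genuinely exceptional divisors with $|I|\ge 3$. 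Your identity can be repaired by splitting $E=\varphi_{\cA}^{*}\bigl(\sum_{w_{\{j,k\}}\le 1}w_{\{j,k\}}D_{j=k}\bigr)+\sum_{w_{I}\le 1}(|I|-2)(1-w_{I})D_{0,I}$, but then the divisor whose ampleness you must establish is $H_{\cA}+\sum w_{\{i,j\}}D_{i=j}$, i.e.\ the paper's $\varphi_{\cA *}(\Delta_{\cA})$, not $H_{\cA}$.

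A secondary issue is that your ampleness argument leans on two unsubstantiated nefness claims. First, "$\sum a_{i}\psi_{i}$ is nef" on $\cMga$ is not available; what you actually need, and what the paper proves, is only $\psi_{i}\cdot B\ge 0$ for curves $B$ with smooth general fiber, via the comparison $\psi_{i}=-\sigma_{i}^{2}\ge-\sigma_{i}^{\prime 2}=\omega\cdot\sigma_{i}'\ge 0$ with the stable model and Arakelov's result — repairable, but it must be argued. Second, in the coincident-section case your formula $\chi_{I}^{*}H_{\cA}=H_{\cA'}+(|I|-1)\psi_{p}$ is correct for $H_{\cA}$, but "ample plus nef" then requires nefness of the bare class $\psi_{p}$ on a Hassett space, which neither the paper nor its cited sources provide; the paper instead restricts the corrected divisor, obtaining in \eqref{eqn-restrictiontobdrysection} the term $(|I|-1)\pi_{*}((\omega+\sum_{i\in J}a_{i}\sigma_{i})\cdot\sigma_{p})$, whose nefness is exactly what Fedorchuk's theorem supplies. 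Both difficulties disappear simultaneously once you replace $H_{\cA}$ by $\varphi_{\cA *}(\Delta_{\cA})$, which is the substantive point your proposal misses.
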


\begin{remark}
\begin{enumerate}
	\item Theorem \ref{thm-mainthm} is a generalization of 
	\cite[Theorem 1.4]{Moo11a} because when $g = 0$, 
	the Hodge class $\lambda$ is trivial.
	\item Theorem \ref{thm-mainthm} suggests that 
	there is an unexpected relation between log canonical model of 
	moduli spaces and that of parameterized curves. 
	Giving a theoretical reason of this phenomenon would be interesting.
\end{enumerate}
\end{remark}

A key step of the proof is to construct an ample divisor on $\cMga$.
\begin{proposition}\label{prop-ample}
Let 
\[
	\Delta_{\cA} := K_{\cMga} + 11 \lambda + \sum_{i=1}^{n}a_{i}\psi_{i}
	= 2\kappa + \sum_{i=1}^{n}(1+a_{i})\psi_{i}.
\]
Then the push-forward $\varphi_{\cA *}(\Delta_{\cA})$ is ample.
\end{proposition}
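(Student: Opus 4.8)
The plan is to prove ampleness directly from Seshadri's criterion (Theorem~\ref{thm-Seshadricriterion}): I will exhibit a single constant $\epsilon>0$ with $\varphi_{\cA *}(\Delta_{\cA})\cdot C\ge \epsilon\cdot\mathrm{mult}_{x}C$ for every irreducible curve $C\subset\cMga$ and every $x\in C$. The first move is to compute $\varphi_{\cA *}(\Delta_{\cA})$ as a tautological divisor on $\cMga$. Regarding $\Delta_{\cA}=2\kappa+\sum_{i}(1+a_{i})\psi_{i}$ as a divisor on $\cMgn$ and applying Corollary~\ref{cor-pushforwardpullbackreduction} (using $\varphi_{\cA *}(\kappa)=\kappa-D_{sec}$ and $\varphi_{\cA *}(\psi_{i})=\psi_{i}+\sum_{w_{\{i,j\}}\le1}D_{i=j}$) yields
\[
	\varphi_{\cA *}(\Delta_{\cA})=(2\kappa+\psi)+\sum_{i}a_{i}\psi_{i}+\sum_{w_{\{i,j\}}\le1}w_{\{i,j\}}D_{i=j},
\]
i.e. the positivity core $2\kappa+\psi$ plus an effective, non-negative combination of the nef classes $\psi_{i}$ and the section-collision boundaries $D_{i=j}$. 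I will call a divisor of this shape \emph{admissible}, and prove by induction on $(g,n)$ (ordered with $g$ primary) that every admissible divisor on every $\overline{\cM}_{g,\cB}$ is ample.

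For a curve $C$ not contained in any boundary divisor---so its general fiber is smooth and its marked points are generically distinct---I realize $C$ as the base of a family of $\cB$-stable curves and apply Proposition~\ref{prop-positivity} to obtain $(2\kappa+\psi)\cdot C\ge\epsilon_{g,\cB}\,\mathrm{mult}_{x}C$. Since $C$ meets each $\psi_{i}$ non-negatively (the psi classes are nef) and each $D_{i=j}$ non-negatively (as $C\not\subset D_{i=j}$), the remaining terms only increase the intersection, so an admissible divisor satisfies the Seshadri bound on $C$ with constant $\epsilon_{g,\cB}$.

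The substantive case is $C$ contained in a boundary divisor $Z$, one of $D_{j,I}$, $D_{irr}$, or $D_{i=j}$. Here a naive nef-plus-positive estimate fails, because these divisors restrict negatively to themselves (for instance $\chi_{I}^{*}D_{i=j}=-\psi_{p}$); instead I restrict $\varphi_{\cA *}(\Delta_{\cA})$ to $Z$ along the corresponding embedding/gluing morphism $\eta_{i,I}$, $\xi$, or $\chi_{I}$. The purpose of Lemmas~\ref{lem-restrictiontonodalboundary},~\ref{lem-restrictiontoirreduciblenodalboundary}, and~\ref{lem-restrictiontoboundaryofsection} is precisely that these pullbacks carry an admissible divisor to an admissible divisor on the smaller moduli spaces forming the factors of $Z$: e.g. $\eta_{i,I}^{*}(2\kappa+\psi)=\pi_{1}^{*}\bigl((2\kappa+\psi)+\psi_{p}\bigr)+\pi_{2}^{*}\bigl((2\kappa+\psi)+\psi_{q}\bigr)$, where the extra gluing and collision $\psi$-terms always occur with non-negative coefficients. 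By the inductive hypothesis each factor carries an ample admissible divisor, and a sum of pullbacks of ample divisors from the factors of a product is ample; hence $\varphi_{\cA *}(\Delta_{\cA})|_{Z}$ is ample. Seshadri's criterion on $Z$ then produces some $\epsilon_{Z}>0$, and since $Z\to\cMga$ is finite onto its image, intersection numbers and multiplicities are preserved, giving the bound on $C$. Taking $\epsilon:=\min(\epsilon_{g,\cB},\min_{Z}\epsilon_{Z})$ over the finitely many boundary divisors closes the induction.

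The induction is well-founded: $D_{irr}$ and $D_{j,I}$ with $0<j<g$ involve only strictly smaller genus, while $D_{0,I}$ and $D_{i=j}$ fix $g$ and strictly decrease $n$. The base cases are $\Mg$ (where the divisor is $2\kappa$, ample by \cite{CH88}), $\overline{\cM}_{1,1}$ (where $\kappa=0$ and $\psi_{1}$ is ample), and all genus-zero factors $\overline{\cM}_{0,\cA_{I}}$. The genus-zero factors are the delicate point: Proposition~\ref{prop-positivity} is false for $g=0$ (as the remark following it records), so the interior argument is unavailable there and I must instead import the genus-zero ampleness established in \cite{Moo11a}. I expect this genus-zero base case, together with the bookkeeping needed to confirm that every boundary restriction remains admissible so that the induction genuinely closes, to be the main obstacle; by contrast, all of the hard positivity input is already isolated in Proposition~\ref{prop-positivity} and Cornalba's theorem.
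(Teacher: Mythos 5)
Your outline reproduces the paper's architecture --- compute $\varphi_{\cA *}(\Delta_{\cA}) = 2\kappa + \sum_{i}(1+a_{i})\psi_{i} + \sum_{w_{\{i,j\}}\le 1}w_{\{i,j\}}D_{i=j}$, apply Seshadri's criterion, induct over the finitely many boundary strata via the restriction lemmas, use Proposition \ref{prop-positivity} for curves meeting the interior, and import genus zero from \cite{Moo11a} --- but your inductive hypothesis, that \emph{every} ``admissible'' divisor $(2\kappa+\psi)+\sum c_{i}\psi_{i}+\sum d_{ij}D_{i=j}$ with arbitrary non-negative coefficients is ample, is false, and the failure sits exactly at the step you defer as bookkeeping. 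Since $\chi_{\{i,j\}}^{*}(D_{i=j})=-\psi_{p}$ while $\chi_{\{i,j\}}^{*}(2\kappa+\psi)=2\kappa+\psi'+\psi_{p}$ on $\overline{\cM}_{g,\cA'}$ (here $\psi'$ is the total psi class there), the admissible divisor $(2\kappa+\psi)+N\,D_{i=j}$ restricts to $(2\kappa+\psi')+(1-N)\psi_{p}$, which for $N\gg 0$ meets curves in $D_{i=j}$ with $\psi_{p}$-degree positive \emph{negatively}; such divisors are not even nef. So your class is not closed under restriction in any ampleness-preserving sense, and repairing it forces coefficient constraints of the shape $d_{ij}\le 1+c_{i}+c_{j}$ whose propagation through iterated restrictions to $D_{k=p}$, $D_{j,I}$, $D_{irr}$ is precisely the content, not an afterthought. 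The paper closes the induction differently: it proves the exact identities \eqref{eqn-restrictiontonodal}--\eqref{eqn-restrictiontobdrysection}, which say that the restriction of the \emph{specific} divisor $\varphi_{\cA *}(\Delta_{\cA})$ is again $\varphi_{\cB *}(\Delta_{\cB})$ for the induced weight datum, plus, on the collision boundary, the leftover term $(|I|-1)\pi_{*}((\omega+\sum_{i\in J}a_{i}\sigma_{i})\cdot\sigma_{p})$, whose nefness is an external positivity input (the third equation of \cite[Theorem 3.1]{Fed11a}). Your proposal invokes nothing playing this role, and without it ``ample plus leftover'' does not close.

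A secondary gap is in your interior case: your estimate rests on the bare assertion that the $\psi_{i}$ are nef on $\cMga$, which is not an off-the-shelf fact for Hassett spaces and is essentially of the same difficulty as the statement being proved. What you actually need is only $\psi_{i}\cdot C\ge 0$ for curves meeting the interior, and the paper proves exactly this, via the stabilization of the family and Arakelov's ampleness of the relative dualizing sheaf: $\psi_{i}=-\sigma_{i}^{2}\ge-\sigma_{i}^{\prime 2}=\omega\cdot\sigma_{i}'\ge 0$. (Granting that input, your direct decomposition of the interior case is legitimate and somewhat simpler than the paper's $\delta$-rewriting, which instead expresses the bulk of the divisor as $\pi_{*}$ of a nef class times an effective class using \cite[Proposition 2.1]{Fed11a} and \cite[Lemma 3.4]{Moo11a}, keeping only $\delta(2\kappa+\psi)$ for Proposition \ref{prop-positivity}.) With the psi inequality supplied by the Arakelov argument, and the induction reformulated on the specific divisors $\varphi_{\cB *}(\Delta_{\cB})$ plus nef classes rather than on your admissible cone, your sketch becomes the paper's proof.
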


\begin{proof}
By using definitions of tautological divisors and several formulas 
in Section \ref{sec-tautologicaldivisor}, 
it is straightforward to see that
\[
	\varphi_{\cA *}(\Delta_{\cA}) = 2\kappa 
	+ \sum_{i=1}^{n}(1+a_{i})\psi_{i} + \sum_{w_{\{i,j\}}\le 1} 
	w_{\{i,j\}}D_{i=j}
	= \pi_{*}((\omega + \sum_{i=1}^{n}a_{i}\sigma_{i})
	(2\omega + \sum_{i=1}^{n}\sigma_{i})).
\]

A key feature of $\varphi_{\cA *}(\Delta_{\cA})$ is that if we restrict it 
to boundaries, the result is also described the same formula. More precisely, 
by Lemma \ref{lem-restrictiontonodalboundary}, it is straightforward to check
\begin{equation}\label{eqn-restrictiontonodal}
	\eta_{i, I}^{*}(\varphi_{\cA *}(\Delta_{\cA}))
	= \pi_{1}^{*}(\varphi_{\cA_{I} *}(\Delta_{\cA_{I}}))
	+ \pi_{2}^{*}(\varphi_{\cA_{I^{c}} *}(\Delta_{\cA_{I^{c}}})).
\end{equation}
Also by Lemma \ref{lem-restrictiontoirreduciblenodalboundary}, 
\begin{equation}\label{eqn-restrictiontoirreduciblenodal}
	\xi^{*}(\varphi_{\cA *}(\Delta_{\cA})) = \varphi_{\cA\cup\{1,1\} *}
	(\Delta_{\cA \cup \{1,1\}}).
\end{equation}
Finally, for $I \subset [n]$ such that $w_{I} \le 1$, 
\begin{equation}\label{eqn-restrictiontobdrysection}
	\chi_{I}^{*}(\varphi_{\cA *}(\Delta_{\cA}))
	= \varphi_{\cA' *}(\Delta_{\cA'}) + (|I| - 1)
	\pi_{*}((\omega + \sum_{i \in J} a_{i}\sigma_{i})\cdot \sigma_{p})
\end{equation}
where $J$ is the index set for new weight datum $\cA'$
(See the notation for Lemma \ref{lem-restrictiontoboundaryofsection}.).

We will use Seshadri's criterion (Theorem \ref{thm-Seshadricriterion}) 
to show the ampleness of $\varphi_{\cA *}(\Delta_{\cA})$. 
For $\overline{\cM}_{1,1}$, it is straightforward 
to check the ampleness, and $g = 0$ is shown in \cite{Moo11a}.
So we can use the induction on the dimension of $\cMga$.

If $B$ is contained in a boundary of nodal curves, 
then $\varphi_{\cA *}(\Delta_{\cA}) \cdot B \ge \epsilon \cdot
\mathrm{mult}_{x}B$ 
by \eqref{eqn-restrictiontonodal} and \eqref{eqn-restrictiontoirreduciblenodal}.
If $B$ is in a boundary of coincident sections, 
then by the induction hypothesis, $\varphi_{\cA' *}(\Delta_{\cA'})$ is 
ample and $\pi_{*}((\omega + \sum_{i \in J}a_{i}\sigma_{i})
\cdot \sigma_{p})$ is nef by the third equation of \cite[Theorem 3.1]{Fed11a}.
Thus $\chi_{I}^{*}(\varphi_{\cA *}(\Delta_{\cA}))$ is ample by
\eqref{eqn-restrictiontobdrysection} and we can find $\epsilon > 0$.

So it is sufficient to check for the case that $B \cap \cM_{g, \cA} \neq 
\emptyset$. Let $\pi : \cU \to B$ with $\sigma_{i} : B \to \cU$ for 
$i = 1, 2, \cdots, n$ be the family of $\cA$-stable curves. 
We rewrite $\varphi_{\cA *}(\Delta_{\cA})$ as 
\[
	\varphi_{\cA *}(\Delta_{\cA}) = 
	\varphi_{\cA *}((\omega + \sum_{a_{i} = 1}\sigma_{i}
	+ \sum_{a_{i} < 1}a_{i}\sigma_{i})
	(2\omega + \sum_{i =1}^{n}\sigma_{i}))
\]
\[
	= \varphi_{\cA *}(((1-\delta)\omega + \sum_{a_{i} = 1}
	(1-\delta)\sigma_{i} + \sum_{a_{i} < 1}a_{i}\sigma_{i})
	(2\omega + \sum_{i=1}^{n}\sigma))
\]
\[
	+ \delta(\sum_{a_{i}=1}\sigma_{i})
	(2\omega + \sum_{i=1}^{n}\sigma_{i})
	+ \delta \omega (2\omega + \sum_{i=1}^{n}\sigma_{i})
\]
\[
	= \varphi_{\cA *}(((1-\delta)\omega + \sum_{a_{i} = 1}
	(1-\delta)\sigma_{i} + \sum_{a_{i} < 1}a_{i}\sigma_{i})
	(2\omega + \sum_{i=1}^{n}\sigma_{i}))
	 + \delta \sum_{a_{i} = 1}\psi_{i} + 
	\delta (2\kappa + \psi).
\]

Note that for there is $\delta > 0$ which depends on $g$ and $\cA$ such that 
\[
	\omega + \sum_{a_{i} = 1}\sigma_{i} + \sum_{a_{i}< 1}
	\frac{1}{1-\delta}a_{i}\sigma_{i}
\]
satisfies the assumption of \cite[Proposition 2.1]{Fed11a}. 
So it is nef on $\cU$. By \cite[Lemma 3.4]{Moo11a}, 
$2\omega + \sum_{i=1}^{n}\sigma_{i}$ is effective. 
Thus 
\[
	\varphi_{\cA *}(((1-\delta)\omega + \sum_{a_{i} = 1}
	(1-\delta)\sigma_{i} + \sum_{a_{i} < 1}a_{i}\sigma_{i})
	(2\omega + \sum_{i=1}^{n}\sigma))
\]
is nef on $B$. For the forgetful map $\rho : \cMga \to \cMg$, 
let $\pi' : \cU' \to \rho(B)$ be the corresponding family and 
$\sigma_{i}'$ be the image of section $\sigma_{i}$ on $\cU'$.
Then $\psi_{i} = -\sigma_{i}^{2} \ge - \sigma_{i}^{\prime 2} = \omega \cdot 
\sigma_{i}' \ge 0$ by \cite[Proposition 3.2]{Ara71}.
Finally, by Proposition \ref{prop-positivity}, there exists $\epsilon > 0$ 
depends only on $g$ and $\cA$ such that 
\[
	\varphi_{\cA *}(\Delta_{\cA}) \cdot B \ge \epsilon \cdot 
	\mathrm{mult}_{x}B
\]
for all $x \in B$.

There are only \emph{finitely} many boundary strata on $\cMga$. 
Therefore we can find the minimum of $\epsilon$ for all strata of $\cMga$
and we obtain an $\epsilon > 0$ for all curves in $\cMga$.
\end{proof}

Now Theorem \ref{thm-mainthm} is an immediate consequence of 
Proposition \ref{prop-ample}.

\begin{proof}[Proof of Theorem \ref{thm-mainthm}]
By Corollary \ref{cor-pushforwardpullbackreduction}, it is straightforward to 
check that 
\[
	\Delta_{\cA} = \varphi_{\cA}^{*}\varphi_{\cA *}(\Delta_{\cA})
	+ \sum_{w_{I} \le 1}(|I|-2)(1-w_{I})D_{0, I}.
\]
Note that $D_{0, I}$ with $|I| \ge 3$ and $w_{I} \le 1$ is an exceptional divisor
for $\varphi_{\cA}$.
Therefore $\Delta_{\cA}$ is a sum of the pull-back of an ample divisor and a
$\varphi_{\cA}$-exceptional effective divisor. 
Hence we obtain 
\[
	\cMgn(\Delta_{\cA}) 
	= \cMgn(\varphi_{\cA}^{*}\varphi_{\cA *}(\Delta_{\cA}))
	= \cMga(\varphi_{\cA *}(\Delta_{\cA})) = \Mga.
\]
See \cite[Proof of Theorem 3.1]{Moo11a} for more detail.
\end{proof}


\bibliographystyle{alpha}
\bibliography{Library}

\end{document}